\tikzset{->-/.style={decoration={
  markings,
  mark=at position 0.5 with {\arrow{stealth}}},postaction={decorate}}}
\tikzset{->>-/.style={decoration={
  markings,
  mark=at position 0.5 with {\arrow{>>}}},postaction={decorate}}}
\tikzset{snake it/.style={decorate, decoration=snake}}
\definecolor{dark-red}{rgb}{0.4,0.15,0.15}
\definecolor{dark-blue}{rgb}{0.15,0.15,0.4}
\definecolor{medium-blue}{rgb}{0,0,0.5}
\LetLtxMacro{\amsmathdots}{\dots}
\DeclareMathOperator{\lcm}{lcm}
\DeclareMathOperator{\tr}{Tr}
\DeclareMathOperator{\Aut}{Aut}
\DeclareMathOperator{\Hom}{Hom}
\DeclareMathOperator{\PGL}{PGL}
\DeclareMathOperator{\PConf}{PConf}
\DeclareMathOperator{\Frob}{Frob}
\DeclareMathOperator{\Tr}{Tr}
\DeclareMathOperator{\Class}{Class}
\newcommand*{\dispunct}[1]{\,\text{#1}}
\newcommand{\from}{\vcentcolon}
\newcommand{\injectsto}{\hookrightarrow}
\newcommand{\subspace}{\subset}
\newcommand{\cdotX}{\!\cdot\!}
\newcommand{\cross}{\times}
\newcommand{\iso}{\simeq}
\newcommand{\union}{\cup}
\newcommand{\divides}{\mathbin{\vert}}
\newcommand{\dsum}{\oplus}
\newcommand{\tensor}{\otimes}
\NewDocumentCommand\xDeclarePairedDelimiter{mmm}
 {%
  \NewDocumentCommand#1{som}{%
   \IfNoValueTF{##2}
    {\IfBooleanTF{##1}{#2##3#3}{\mleft#2##3\mright#3}}
    {\mathopen{##2#2}##3\mathclose{##2#3}}%
  }%
 }
\xDeclarePairedDelimiter{\abs}{\lvert}{\rvert}
\xDeclarePairedDelimiter{\norm}{\lVert}{\rVert}
\xDeclarePairedDelimiter{\floor}{\lfloor}{\rfloor}
\xDeclarePairedDelimiter{\ceil}{\lceil}{\rceil}
\xDeclarePairedDelimiter{\gen}{\langle}{\rangle}
\xDeclarePairedDelimiter{\pseries}{\llbracket}{\rrbracket}
\xDeclarePairedDelimiter{\oneto}{[}{]}
\xDeclarePairedDelimiter{\parenth}{(}{)}
\NewDocumentCommand{\set}{somm}{%
   \IfNoValueTF{#2}
    {\IfBooleanTF{#1}{\{#3 \mid #4\}}{\mleft\{ #3 \mathrel{}\middle\vert\mathrel{} #4 \mright\}}}
    {\mathopen{#2\{}#3 \mathrel{}#2\vert\mathrel{} #4\mathclose{#2\}}}%
  }
\NewDocumentCommand{\present}{somm}{%
   \IfNoValueTF{#2}
    {\IfBooleanTF{#1}{\langle#3 \mid #4\rangle}{\mleft\langle#3 \mathrel{}\middle\vert\mathrel{} #4 \mright\rangle}}
    {\mathopen{#2\langle}#3 \mathrel{}#2\vert\mathrel{} #4\mathclose{#2\rangle}}%
  }
\NewDocumentCommand{\inner}{somm}{%
   \IfNoValueTF{#2}
    {\IfBooleanTF{#1}{\langle#3 , #4\rangle}{\mleft\langle#3 , #4 \mright\rangle}}
    {\mathopen{#2\langle}#3 , #4\mathclose{#2\rangle}}%
  }
\renewcommand{\AA}{\mathbb{A}}
\newcommand{\CC}{\mathbb{C}}
\newcommand{\FF}{\mathbb{F}}
\newcommand{\NN}{\mathbb{N}}
\newcommand{\PP}{\mathbb{P}}
\newcommand{\QQ}{\mathbb{Q}}
\newcommand{\ZZ}{\mathbb{Z}}
\newcommand{\cA}{\mathcal{A}}
\newcommand{\cL}{\mathcal{L}}
\newcommand{\fS}{\mathfrak{S}}
\newcommand{\X}[1]{F_{#1}}
\newcommand{\U}[1]{B_{#1}}
\newcommand{\Fib}[1]{X_{#1}}
\newcommand{\qnpoints}[2]{\PP^{#1}\parenth{\widebar{\FF}_{q}^{(#2)}}}
\newcommand{\qpoints}[1]{\PP^{#1}(\FF_q)}
\newcommand{\qngenpoints}[2]{\PP^{#1}\parenth{\widebar{\FF}_{q}^{(#2, \text{ gen})}}}
\newcolumntype{C}{>{\raggedright\arraybackslash}X}
\newcommand*{\widebar}[1]{\mkern 1.5mu\overline{\mkern-1.5mu#1\mkern-1.5mu}\mkern 1.5mu}
\newcommand*{\cl}[1]{
\begingroup
    \setbox\z@=\hbox{\ensuremath{#1}}%
    \ifdimgreater{\wd\z@}{4em}{\mleft(#1\mright)^{-}}{\widebar{#1}}
\endgroup
}
\newcommand*{\interior}[1]{
\begingroup
    \setbox\z@=\hbox{\ensuremath{#1}}%
    \ifdimgreater{\wd\z@}{1.5em}{\mleft(#1\mright)^{\circ}}{\accentset{\circ}{#1}}
\endgroup
}
\numberwithin{equation}{section}
\declaretheorem[sibling=equation]{theorem}
\declaretheorem[sibling=theorem]{lemma}
\declaretheorem[sibling=theorem]{corollary}
\declaretheorem[sibling=theorem]{proposition}
\declaretheorem[numbered=no, title=Theorem]{theorem*}
\declaretheorem[numbered=no, title=Corollary]{corollary*}
\declaretheorem[numbered=no, title=Lemma]{lemma*}
\declaretheorem[numbered=no, title=Proposition]{proposition*}
\declaretheorem[numbered=no, title=Conjecture]{conjecture*}
\declaretheorem[sibling=theorem, style=definition]{definition}
\declaretheorem[numbered=no, style=definition, title=Definition]{definition*}
\declaretheorem[numbered=no, style=definition, title=Exercise]{exercise*}
\declaretheorem[sibling=theorem, style=remark]{remark}
\declaretheorem[numbered=no, style=remark, title=Remark]{remark*}
\declaretheorem[numbered=no, style=remark, title=Example]{example*}
\newlist{singularity}{enumerate}{2}
\setlist[singularity,1]{label=(\Roman*),noitemsep, ref=\Roman*}
\setlist[singularity,2]{label=(\alph*),noitemsep, ref=\alph*}
\def\paragraph{\@startsection{paragraph}{4}%
  \z@\z@{-\fontdimen2\font}%
  {\normalfont\bfseries}}
\title{Configurations of noncollinear points in the projective plane}
\author{Ronno Das \and Ben O'Connor}
\begin{document}

\begin{abstract}
We consider the space $\X{n}$ of configurations of $n$ points in $\PP^2$ satisfying the condition that no three of the points lie on a line. 
For $n = 4, 5, 6$, we compute $H^*(\X{n}; \QQ)$ as an $\fS_n$-representation. 
The cases $n = 5, 6$ are computed via the Grothendieck--Lefschetz trace formula in \'etale cohomology and certain ``twisted'' point counts for analogous spaces over $\FF_q$.
\end{abstract}
\todo{AMS classes}

\maketitle

\section{Introduction}

Given a space $X$, the \emph{configuration space} $\PConf_n(X)$ is the space of ordered $n$-tuples of distinct points in $X$. 
When $X$ has more structure, for example when $X$ is a vector space or projective space, one can look at more refined nondegeneracy conditions on these tuples.
In this paper we look at the space $\X{n}$ of $n$-tuples of distinct points on $\CC\PP^2$ such that no three are collinear.
The symmetric group $\fS_n$ acts on $\X{n}$ by permuting coordinates, and we look at the quotient $\U{n} \vcentcolon= \X{n}/\fS_n$ as well.

While $\X{n}$ and $\U{n}$ are natural and basic, little seems to be known about their topology.
Moulton (\cite{Moulton98}) provided a finitely presented group that surjects onto $\pi_1(\X{n})$.
Feler (\cite{Feler06}) showed that the only holomorphic automorphisms of $\X{n}$ that are equivariant under the natural $\fS_n$-action are $\fS_n$\nobreakdash-equivariant choices of linear change of coordinates.
Ashraf--Bercenau (\cite{AB12}) computed the cohomology algebras of $\X{3}$ and $\U{3}$.

The space $\X{n}$ also comes equipped with a natural action of $\PGL_3(\CC)$ (see \cref{basics}); we denote the quotient by $\Fib{n}$.
In fact, $\X{n} \cong \PGL_3(\CC) \times \Fib{n}$ (see \cref{pgl3-quotient-factors}) and hence the Kunneth isomorphism tells us that 
\[H^*(\X{n}) \cong H^*(\PGL_3(\CC)) \tensor H^*(\Fib{n}) \dispunct.\]
We show that this is also true as representations of $\fS_n$ (where the action on $H^*(\PGL_3(\CC))$ is trivial, see \cref{tensor-factor,sn-action-on-pgl3-quotient}).

The main results of this paper are to compute $H^*(\X{n}; \QQ)$ and $H^*(\U{n}; \QQ)$ for $n = 5$ and $n = 6$ (the $n = 4$ case is trivial).
To determine $H^*(\U{n}; \QQ)$, we determine $H^i(\X{n}; \QQ)$ as an $\fS_n$-representation and use transfer.
Below, $U$ and $V$ stand for the trivial and fundamental representations respectively, of either $\fS_5$ or $\fS_6$.
Other irreducibles are subscripted by the corresponding partitions.
We also use the convention that $H^2(\PP^1)$ has weight $1$.
\begin{theorem}\label{main-theorem}
With terminology as above and as $\fS_5$ representations,
\[H^*(\Fib{5}; \QQ) \cong \begin{dcases*} U & if $* = 0$,\\
S_{3,2} & if $* = 1$,\\
\wedge^2 V & if $* = 2$,\\
0 & otherwise.
\end{dcases*}\]

As $\fS_6$ representations,
\[H^*(\Fib{6}; \QQ) \cong \begin{dcases*} U & if $* = 0$,\\
S_{3,3} \dsum S_{4,2} & if $* = 1$,\\
V \dsum \wedge^2 V^{\dsum 2} \dsum \wedge^3 V \dsum S_{3,3} \dsum S_{3,2,1}^{\dsum 2} & if $* = 2$,\\
V \dsum \wedge^2 V^{\dsum 3} \dsum \wedge^3 V^{\dsum 3} \dsum S_{3,3} \dsum S_{2,2,2} \dsum S_{4,2}^{\dsum 2} \dsum S_{2,2,1,1}^{\dsum 2} \dsum S_{3,2,1}^{\dsum 3} & if $* = 3$,\\
U \dsum U' \dsum V \dsum V' \dsum \wedge^2 V \dsum \wedge^3 V^{\dsum 2} \dsum S_{3,3}^{\dsum 2} \dsum S_{2,2,2}^{\dsum 3} \dsum S_{4,2}^{\dsum 2} \dsum S_{2,2,1,1} \dsum S_{3,2,1}^{\dsum 3} & if $* = 4$,\\
0 & otherwise.
\end{dcases*}\]
Each mixed Hodge structure $H^i(\Fib{n}; \QQ)$ above is concentrated in weight $i$.
\end{theorem}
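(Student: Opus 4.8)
The plan is to compute $H^*(\Fib{n};\QQ)$ as an $\fS_n$-representation by point-counting over finite fields, using the Grothendieck--Lefschetz trace formula in its equivariant form, and to establish the purity statement separately so that the representation-theoretic data can be read off the counts. Since the conditions defining $\X{n}$ (distinctness, noncollinearity, freeness of the $\PGL_3$-action on frames) make sense over $\ZZ$, the spaces $\X{n}$ and $\Fib{n}=\X{n}/\PGL_3$ extend to smooth $\ZZ$-schemes carrying commuting $\fS_n$- and $\PGL_3$-actions. By Artin's comparison theorem and smooth base change, $H^*(\Fib{n}(\CC);\QQ)$ with its $\fS_n$-action and mixed Hodge structure is then recovered from $H^*_{\et}(\Fib{n,\Fqbar};\QQ_\ell)$ with its commuting $\Frob_q$- and $\fS_n$-actions for all but finitely many $q$; and once we know (see below) that these groups are of Tate type, the Frobenius action and the Hodge structure carry exactly the same information.

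For $\sigma\in\fS_n$ the equivariant trace formula reads
\[
  \sum_i (-1)^i \Tr\bigl(\sigma\circ\Frob_q \bigm| H^i_c(\Fib{n,\Fqbar};\QQ_\ell)\bigr)
  \;=\; \#\set{x\in\Fib{n}(\Fqbar)}{\sigma\circ\Frob_q(x)=x}.
\]
Because $\PGL_3$ is connected, Lang's theorem shows that each fixed point of $\sigma\circ\Frob_q$ on $\Fib{n}(\Fqbar)$ lifts to exactly $\abs{\PGL_3(\FF_q)}$ fixed points of $\sigma\circ\Frob_q$ on $\X{n}(\Fqbar)$, so the right-hand side equals $\abs{\X{n}^\sigma(\FF_q)}/\abs{\PGL_3(\FF_q)}$, where $\abs{\PGL_3(\FF_q)}=q^8-q^6-q^5+q^3$ and $\abs{\X{n}^\sigma(\FF_q)}$ denotes the number of $\sigma$-twisted $\FF_q$-points of the configuration space $\X{n}$ itself. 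Spelled out, the latter counts the ways to choose, for each cycle of $\sigma$, a closed point of $\PP^2_{\FF_q}$ whose degree equals the length of that cycle, so that these closed points are pairwise distinct and the resulting set of $\Fqbar$-points contains no collinear triple.

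The combinatorial heart of the argument is to evaluate this count for $n=5$ and $n=6$, one conjugacy class at a time, as a polynomial in $q$. I would proceed by inclusion--exclusion on the collinearity conditions: start from the appropriate product of numbers of closed points of prescribed degrees in $\PP^2_{\FF_q}$, then correct for coincidences among the chosen points and for forced collinear triples, organized according to which triples are made collinear. The untwisted case $\sigma=\mathrm{id}$ is classical — it recovers $\abs{\Fib{5}(\FF_q)}=(q-2)(q-3)$ and its $n=6$ analogue — but the twisted cases are genuinely more delicate, because a closed point of degree $\geq 2$ forces Galois-stable incidences among its conjugates (for instance a degree-$2$ point is collinear with a given rational point precisely when that point lies on the unique rational line joining its two conjugates), so the collinearity events attached to different cycles of $\sigma$ are correlated and must be treated together, cycle by cycle. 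The outcome is, for each conjugacy class of $\fS_n$, a polynomial $P_\sigma(q)\in\ZZ[q]$ equal to $\abs{\Fib{n}^\sigma(\FF_q)}$.

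What remains is to establish that $H^i(\Fib{n};\QQ)$ is pure of weight $i$ and of Tate type; granted this, Poincaré duality — which is $\fS_n$-equivariant since $\fS_n$ acts holomorphically — shows $\Tr(\sigma\circ\Frob_q\mid H^i_c(\Fib{n}))=q^{\,i-d}\,\chi_{2d-i}(\sigma)$, where $d=\dim\Fib{n}$ and $\chi_j$ is the character of the $\fS_n$-representation $H^j(\Fib{n};\QQ)$, so comparing coefficients of powers of $q$ in $P_\sigma(q)$ across all conjugacy classes determines each $\chi_j$ as a virtual character; decomposing into irreducibles — and checking that all multiplicities come out nonnegative, which simultaneously corroborates the purity input — yields the stated decompositions. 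For the purity itself: when $n=5$, normalizing a projective frame on four of the five points identifies $\Fib{5}$ with the complement in $\PP^2$ of the six lines of the complete quadrilateral on those four points, an affine hyperplane-arrangement complement, whose cohomology is automatically pure of Tate type and concentrated in degrees $\leq 2$, and the (non-manifest) $\fS_5$-action operates on it. When $n=6$, the forgetful map $\Fib{6}\to\Fib{5}$ has fibers equal to the complement in $\PP^2$ of the ten lines joining pairs of the first five points — affine, two-dimensional, with pure Tate cohomology — so Artin vanishing together with the Leray spectral sequence force $H^j(\Fib{6})=0$ for $j>4$ and, once one checks that the constructible (Tate) higher direct images carry the expected weights despite the jumps in the combinatorial type of the fiber arrangement, force $H^j(\Fib{6})$ to be pure of weight $j$. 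This purity analysis for $\Fib{6}$, hand in hand with the case-by-case collinearity inclusion--exclusion behind the $n=6$ twisted point counts, is where I expect the real work to lie.
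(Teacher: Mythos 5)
Your strategy is essentially the paper's: equivariant Grothendieck--Lefschetz trace formula together with purity of $H^*(\Fib{n})$ --- coming from $\Fib{5}$ being an affine hyperplane-arrangement complement and $\Fib{6}\to\Fib{5}$ being a bundle with affine hyperplane-complement fibers --- reduces the theorem to twisted point counts indexed by conjugacy classes, which are then unwound to give the $\fS_n$-characters degree by degree. The one packaging difference is that you descend the count from $\X{n}$ to $\Fib{n}$ via Lang's theorem on the $\PGL_3$-torsor, where the paper instead trivializes $\X{n}\cong\PGL_3\times\Fib{n}$ and, carefully, shows this splitting is $\fS_n$-equivariant on cohomology (their Propositions 2.5--2.6) before pushing the counts through $\U{n}$; both routes are fine.

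That said, there is a real gap: the theorem is a specific list of $\fS_5$- and $\fS_6$-decompositions, and everything in the statement beyond degrees and purity is carried by the $7+11$ explicit polynomials $P_\sigma(q)$, which you assert exist but do not compute. This is not a detail --- it is the bulk of the paper (all of Section 4), and it rests on a non-obvious toolkit: lemmas governing the possible Frobenius-orbit sizes of $\gen{p_1,p_2}$ and of $\gen{p,f^r(p)}$, a notion of ``generic'' $q^n$-point (not lying on any low-degree line) with closed-form counts, and a cycle-by-cycle inclusion--exclusion that has to track correlated collinearities induced by Galois orbits. You correctly diagnose where the difficulty is, but the proposal stops exactly there. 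Separately, your caveat about ``jumps in the combinatorial type of the fiber arrangement'' in the purity argument for $\Fib{6}$ is a phantom worry: for five points with no three collinear, the arrangement of the ten joining lines has \emph{constant} combinatorial type (among five indices there are no three pairwise-disjoint pairs, so no unexpected concurrences can occur), which is precisely why $\Fib{6}\to\Fib{5}$ is an honest fiber bundle while $\Fib{n}\to\Fib{n-1}$ fails to be one for $n\ge 7$ (the paper's Remark 3.6).
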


\bigskip

Using transfer, we can then easily obtain the rational cohomology of 
\[\U{n} = \X{n}/\fS_n = (\PGL_3(\CC) \times \Fib{n})/\fS_n\]
for $n = 5$ and $n = 6$.
\begin{corollary}
With terminology as above:
\begin{align*}
H^*(\U{5} ; \QQ) & \cong H^*(\PGL_3(\CC)) \cong \begin{dcases*}
\QQ & if $* = 0, 3, 5, 8$,\\
0 & otherwise.
\end{dcases*}\\
H^*(\U{6} ; \QQ) & \cong H^*(S^4 \times \PGL_3(\CC)) \cong \begin{dcases*}
\QQ & if $* = 0, 3, 4, 5, 7, 8, 9, 12$,\\
0 & otherwise.
\end{dcases*}\\
\end{align*}
The first isomorphism is induced by the orbit map and hence is an isomorphism of mixed Hodge structures.
Similarly the inclusion of $H^*(\PGL_3(\CC))$ into $H^*(\U{6})$ preserves mixed Hodge structures, and the extra generator in $H^4(\U{6})$ has weight $4$.
\end{corollary}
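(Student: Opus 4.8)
The plan is to deduce the corollary formally from \cref{main-theorem} together with the structural facts already recorded in the introduction: the product decomposition $\X{n}\cong\PGL_3(\CC)\times\Fib{n}$ of \cref{pgl3-quotient-factors}, the K\"unneth isomorphism, the triviality of the $\fS_n$-action on the $H^*(\PGL_3(\CC))$ tensor factor (\cref{tensor-factor,sn-action-on-pgl3-quotient}), and the transfer isomorphism $H^*(Y/G;\QQ)\cong H^*(Y;\QQ)^G$ for a finite group $G$. The cohomological identifications below are all induced by morphisms of complex algebraic varieties --- the quotient map $\X{n}\to\U{n}$, the decomposition of \cref{pgl3-quotient-factors}, and the slice inclusions used in the final step --- together with the (Hodge-compatible) K\"unneth formula, so they automatically respect mixed Hodge structures.

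\textbf{Reduction to invariants.} First I would combine transfer for the $\fS_n$-action on $\X{n}$ with K\"unneth and the triviality of the action on the first factor to obtain
\[
H^*(\U{n};\QQ)\;\cong\;H^*(\X{n};\QQ)^{\fS_n}\;\cong\;\bigl(H^*(\PGL_3(\CC);\QQ)\tensor H^*(\Fib{n};\QQ)\bigr)^{\fS_n}\;\cong\;H^*(\PGL_3(\CC);\QQ)\tensor H^*(\Fib{n};\QQ)^{\fS_n}\dispunct{.}
\]
This reduces everything to two inputs: the rational cohomology of $\PGL_3(\CC)$, and the $\fS_n$-invariant part of $H^*(\Fib{n};\QQ)$, the latter read directly off \cref{main-theorem}.

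\textbf{The two inputs.} The cohomology $H^*(\PGL_3(\CC);\QQ)$ is classical: $\PGL_3(\CC)$ is homotopy equivalent to its maximal compact $\mathrm{PU}(3)$, which has the same rational cohomology $\Lambda_\QQ(x_3,x_5)$ ($\deg x_3=3$, $\deg x_5=5$) as the finite central cover $\mathrm{SU}(3)$; so its Poincar\'e polynomial is $(1+t^3)(1+t^5)=1+t^3+t^5+t^8$. For the invariants, the irreducible $\fS_n$-representations listed in \cref{main-theorem} are pairwise non-isomorphic, so the multiplicity of the trivial representation $U$ in $H^i(\Fib{n};\QQ)$ is simply the number of times $U$ itself occurs in the given decomposition. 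Thus $H^*(\Fib{5};\QQ)^{\fS_5}\cong\QQ$, concentrated in degree $0$ (neither $S_{3,2}$ nor $\wedge^2 V$ is trivial), and $H^*(\Fib{6};\QQ)^{\fS_6}\cong\QQ$ in degrees $0$ and $4$ and vanishes otherwise --- here the only point is that $U'$, the sign representation appearing in degree $4$, is not the trivial representation, and that no trivial summand occurs in degrees $1$, $2$, or $3$. Feeding this into the displayed formula gives $H^*(\U{5};\QQ)\cong H^*(\PGL_3(\CC);\QQ)$ and $H^*(\U{6};\QQ)\cong H^*(\PGL_3(\CC);\QQ)\tensor H^*(S^4;\QQ)\cong H^*(\PGL_3(\CC)\times S^4;\QQ)$, with Poincar\'e polynomials $1+t^3+t^5+t^8$ and $(1+t^3+t^5+t^8)(1+t^4)$, which are exactly the dimensions asserted.

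\textbf{Mixed Hodge structures.} It remains to pin down the Hodge-theoretic refinements, which is the only place that needs a little care. For $n=5$, I would realize the isomorphism by the orbit map $r\colon\PGL_3(\CC)\to\U{5}$ through a fixed configuration, factored as $\PGL_3(\CC)\xrightarrow{\iota}\X{5}\xrightarrow{q}\U{5}$ with $\iota$ the inclusion of a $\PGL_3(\CC)$-slice of $\PGL_3(\CC)\times\Fib{5}$: then $q^*$ is injective with image $H^*(\PGL_3(\CC);\QQ)\tensor H^0(\Fib{5};\QQ)$, and $\iota^*$ carries this image isomorphically onto $H^*(\PGL_3(\CC);\QQ)$, so $r^*=\iota^*q^*$ is an isomorphism; being a composite of pullbacks along morphisms of varieties, it is strictly compatible with mixed Hodge structures. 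For $n=6$ the same factorization identifies $H^*(\PGL_3(\CC);\QQ)\cong H^*(\PGL_3(\CC);\QQ)\tensor H^0(\Fib{6};\QQ)$ with a sub-mixed-Hodge-structure of $H^*(\X{6};\QQ)^{\fS_6}=q^*H^*(\U{6};\QQ)$, which is the claimed embedding; and the extra degree-$4$ generator of $H^*(\U{6};\QQ)$ corresponds to $1\tensor\gamma$, where $\gamma$ generates the invariant line in $H^4(\Fib{6};\QQ)$ and hence has weight $4$ by the last sentence of \cref{main-theorem}. Beyond this bookkeeping I do not anticipate a genuine obstacle: the corollary is a formal consequence of \cref{main-theorem}, the only substantive ingredients being the elementary remark that just the trivial summands of $H^*(\Fib{n})$ survive the $\fS_n$-invariants, and the classical shape of $H^*(\PGL_3(\CC);\QQ)$.
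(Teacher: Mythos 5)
Your proposal is correct and reproduces the argument the paper has in mind: the paper introduces the corollary with the sentence ``Using transfer, we can then easily obtain the rational cohomology of $\U{n} = \X{n}/\fS_n = (\PGL_3(\CC) \times \Fib{n})/\fS_n$'' and leaves the computation implicit, and you have filled in precisely those steps (transfer to $\fS_n$-invariants, K\"unneth with the trivial $\fS_n$-action on the $\PGL_3$ factor via \cref{tensor-factor}, extraction of the trivial summands of $H^*(\Fib{n})$ from \cref{main-theorem}, the standard $H^*(\PGL_3(\CC);\QQ)\cong\Lambda(x_3,x_5)$, and the orbit map for compatibility of mixed Hodge structures). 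The bookkeeping --- $S_{3,2}$ and $\wedge^2 V$ nontrivial for $n=5$; for $n=6$ a single $U$ in degrees $0$ and $4$ with $U'$ being the sign --- is accurate, and the weight-$4$ claim follows directly from the last sentence of \cref{main-theorem}.
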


Blowing up $\CC\PP^2$ at five points, no three of which are collinear, produces a del Pezzo surface of degree $4$.
Accordingly, $\Fib{5}$ is the moduli space of marked del Pezzo surfaces of degree $4$ --- a $W(D_5)$ cover of the moduli space of del Pezzo surfaces of degree $4$.
Similarly, blowing up $\CC\PP^2$ at six points, no three of which are collinear \emph{and} not all six on a conic, produces a del Pezzo surface of degree $3$, or a smooth cubic surface. 
When the six points do lie on a conic, blowing up still produces a cubic surface, but with exactly one node\todo{citation needed}. 
Thus $\Fib{6}$ is closely related to the moduli space of cubic surfaces with at most one nodal singularity.

For our computations, we use that for $n \le 6$, the projection map $\X{n} \to \X{n-1}$ forgetting one of the points is a fiber bundle (see \cref{fiber-hyperplane-complement}).
Unfortunately, the projection map is no longer a fiber bundle for $n > 6$; see \cref{bad-projection}.
Further, even for $n \le 6$, the projection map is only $\fS_{n-1}$-equivariant, so additional arguments are needed to analyze the $\fS_n$-action and also to understand the differentials in the associated spectral sequence.
We use that the fiber is a hyperplane complement, and in particular its cohomology is generated in degree $1$ by hyperplane classes of weight $1$.
This lets us, via the machinery of the Weil conjectures, use point counts over finite fields to obtain the Betti numbers, as well as the characters of these $\fS_n$-representations.

In general, the space $\X{n}$ can be stratified so that the map $\X{n+1} \to \X{n}$ is a fiber bundle over each strata.
However, for $n$ sufficiently large, the topology of these strata should be arbitrarily complicated, in the sense that they will have singularities of every type (see \cite{Mnev85,Mnev88}, also \cite{Vakil04}).

The $n=5$ case of \cref{main-theorem} was independently proved by Bergvall--Gounelas in \cite{BG19}.
Similar arguments are used by Bergvall in \cite{Bergvall16} to compute the cohomology of a related space.
The \emph{untwisted} point counts of $\X{n}$ (\cref{s5-untwisted,s6-untwisted}) were previously known, see \cite[Theorem~4.1]{Glynn88}. 

\subsection{Acknowledgements}
We thank Benson Farb for suggesting the problem and copious help throughout the project and composition of this paper. 
We are grateful to Nir Gadish, Nate Harman, Edouard Loojienga and Ravi Vakil for helpful conversations.
We thank Nathaniel Mayer for helpful remarks and for confirming many of the computations in \cref{counting}.
We thank Dan Petersen for helpful comments on a previous version of this manuscript.

\section{Configurations of non-collinear points}
\label{basics}
All the constructions in this section are over the field of complex numbers.
Much of it works over any field, but we leave the specifics to the reader.
Let $\X{n} \subset (\CC\PP^2)^n$ be the space of $n$-tuples $(x_1, \dots, x_n) \in (\CC\PP^2)^n$ such that no three of $x_1, \dots, x_n$ are collinear.
If each $x_i \in \CC\PP^2$ has coordinates $[x_{i1} : x_{i2} : x_{i3}]$, the condition that a specified triple $(x_i, x_j, x_k)$ lies on a line is equivalent to the vanishing of the determinant of the $3 \cross 3$ matrix of coordinates
\[\Delta_{ijk} =
	\begin{vmatrix}
		x_{i1} & x_{j1} & x_{k1}
		\\
		x_{i2} & x_{j2} & x_{k2}
		\\
		x_{i3} & x_{j3} & x_{k3}
	\end{vmatrix} \dispunct.\]
This describes $\X{n}$ as the complement in $(\PP^2)^n$ of the zero set of the integral polynomial
\[\Delta_{n}^{\text{colin}} = \prod_{1 \leq i < j < k \leq n} \Delta_{ijk} \dispunct.\]

\begin{remark}
By definition, $\X{n}$ is a subset of the configuration space of $n$ points in $\CC\PP^2$:
\[\X{n} \subseteq \PConf_n(\CC\PP^2) = \set{(x_1, \dots, x_n) \in \CC\PP^2}{x_i \ne x_j \text{ for } i \ne j} \dispunct.\]
\end{remark}

Thinking of $\X{n}$ as a set of embeddings from the $n$-point set $[n] \vcentcolon = \{1, \dots, n\}$ to $\CC\PP^2$, we have an action of $\Aut([n]) = \fS_n$ on the domain and an action of $\Aut(\CC\PP^2) = \PGL_3(\CC)$ on the target, and the induced actions on $\X{n}$ commute.
As a subset of $(\CC\PP^2)^n$, the action of $\fS_n$ is by permuting coordinates and the $\PGL_3(\CC)$-action is diagonal.

The action of $\fS_n$ on $\X{n}$ is free and proper discontinuous, so we can define the quotient space of unordered points $\U{n} = \X{n} / \fS_n$, the quotient map $\X{n} \to \U{n}$ is a normal cover with deck group $\fS_n$.
\begin{remark}\label{sn-action-on-pgl3-quotient}
Since the actions of $\fS_n$ and $\PGL_3(\CC)$ commute, the action of $\PGL_3(\CC)$ descends to $\U{n}$ and the covering map $\X{n} \to \U{n}$ is $\PGL_3(\CC)$-equivariant.
Similarly the action of $\fS_n$ descends to $\Fib{n} \vcentcolon = \X{n}/\PGL_3(\CC)$, and the map $\X{n} \to \Fib{n}$ is $\fS_n$-equivariant.
\end{remark}

The primary goal of this section is to describe the extent to which the $\PGL_3(\CC)$ and $\fS_n$-actions on $\X{n}$ are compatible.
The case $n=4$ is completely determined by \cref{pgl3homeo,s4-action-trivial} below, which state that $\X{4}$ is a $\PGL_3(\CC)$-torsor, and the action of $\fS_4$ actually extends to an action of $\PGL_3(\CC)$.
From this we determine much of the structure for $n > 4$, with the main result of the section, \cref{tensor-factor}, stating that at the level of rational cohomology, the quotient $\Fib{n}$ completely describes the $\fS_n$-action.

In \cref{point-counts}, we describe how counting ``twisted'' points of appropriate analogs of $\X{n}$ and $\U{n}$ over the finite field $\FF_q$ relates to the rational cohomology of $\X{n}$.
Then we prove \cref{main-theorem} assuming these point counts.
In \cref{counting}, we determine the point counts, seven cases for $n = 5$ and eleven cases for $n = 6$ (corresponding to the conjugacy classes in $\fS_n$), after some brief setup of appropriate notation and terminology.

As indicated above, the following proposition considers only the special case $n = 4$, but it plays a central role in understanding further cases.

\begin{proposition}\label{pgl3homeo}
Choosing a basepoint $x \in \X{4}$, the orbit map $\PGL_3(\CC) \to \X{4}$, given by $g \mapsto g \cdot x$, is a homeomorphism.
\end{proposition}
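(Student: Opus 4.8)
The plan is to show the orbit map is a continuous bijection whose inverse is also continuous; since $\PGL_3(\CC)$ and $\X{4}$ are both complex manifolds (in fact smooth varieties), it suffices to exhibit a continuous inverse, or even just to check bijectivity together with a dimension/properness argument. First I would recall the classical fact from projective geometry that $\PGL_3(\CC)$ acts simply transitively on ordered $4$-tuples of points of $\CC\PP^2$ in general position, i.e. on $\X{4}$: given such a $4$-tuple, there is a unique projective transformation carrying the standard frame $[1:0:0], [0:1:0], [0:0:1], [1:1:1]$ to it. Concretely, if $x = (x_1, x_2, x_3, x_4) \in \X{4}$ with $x_i = [v_i]$ for vectors $v_i \in \CC^3$, then $v_1, v_2, v_3$ form a basis (no three collinear, in particular the first three are not collinear, so they are linearly independent), and one writes $v_4 = a_1 v_1 + a_2 v_2 + a_3 v_3$ with all $a_i \neq 0$ (again by the noncollinearity hypotheses on the triples involving $x_4$). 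Then $g = [a_1 v_1 \mid a_2 v_2 \mid a_3 v_3]$ is the unique element of $\PGL_3(\CC)$ sending the standard frame to $x$.

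Concretely, I would take the basepoint $x$ to \emph{be} the standard frame, so that the orbit map $g \mapsto g \cdot x$ is injective (if $g \cdot x = h \cdot x$ then $h^{-1} g$ fixes the standard frame, hence $h^{-1}g = e$ by the uniqueness above — this is the free part of the action) and surjective onto $\X{4}$ (by the transitivity just described). So the orbit map is a continuous bijection. To upgrade this to a homeomorphism, I would write down the inverse explicitly: sending $(x_1, x_2, x_3, x_4) \in \X{4}$ to the class in $\PGL_3(\CC)$ of the matrix with columns $a_1 v_1, a_2 v_2, a_3 v_3$ as above. The scalars $a_i$ and the lifts $v_i$ can be chosen to depend continuously (indeed algebraically, locally) on the point, e.g. by solving the linear system $v_4 = \sum a_i v_i$ by Cramer's rule — the relevant determinants are exactly the $\Delta_{ijk}$, which are nonvanishing on $\X{4}$ — so the inverse map is continuous. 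Hence the orbit map is a homeomorphism. (Alternatively, one can avoid the explicit inverse: the orbit map is an injective continuous map of smooth manifolds of the same dimension $8$, so by invariance of domain it is an open map, hence a homeomorphism onto its image, which is all of $\X{4}$.)

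I expect the main point requiring care — though it is not really an obstacle — is the bookkeeping showing that \emph{all} the coefficients $a_i$ are nonzero and that the first three vectors are genuinely linearly independent, which is where the full ``no three collinear'' hypothesis (applied to every triple among the four points) gets used, rather than just distinctness. Everything else is soft: transitivity and freeness of the action on frames is standard linear algebra, and continuity of the inverse follows from Cramer's rule or, more cheaply, from invariance of domain. It is worth noting that this proposition is genuinely special to $n = 4$: for $n \geq 5$ the action is still free but no longer transitive, and the quotient $\Fib{n}$ becomes nontrivial; the value of the $n=4$ case is that it provides the torsor structure that, in later sections, lets one split off the $H^*(\PGL_3(\CC))$ tensor factor.
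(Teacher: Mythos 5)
Your proposal is correct and takes essentially the same approach as the paper, whose entire proof is the one-line observation that the $\PGL_3(\CC)$-action on $\X{4}$ is free and transitive. You have simply supplied the details that the paper leaves implicit: the standard-frame argument for simple transitivity, and the upgrade from continuous bijection to homeomorphism (via an explicit Cramer's-rule inverse or invariance of domain), both of which the paper tacitly assumes the reader can reconstruct.
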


\begin{proof}
The action of $\PGL_3(\CC)$ on $\X{4}$ is free and transitive.
\end{proof}

\begin{remark}
The same argument shows that $\PGL_n(\CC)$ is isomorphic to the space of $(n+1)$ ordered points in $\CC\PP^{n-1}$ such that no subset of $n$ points is contained in a $\CC\PP^{n-2}$ hyperplane.
This is an obvious generalization of the fact that $\PGL_2(\CC)$ action on $\CC\PP^1$ (by Möbius transformations) induces a free and transitive action on $\PConf_3(\CC\PP^1)$.
\end{remark}

\begin{proposition} \label{s4-action-trivial}
The $\fS_4$-action on $\X{4}$ is homotopically trivial.
In particular, 
\[H^*(\X{4}(\CC);\QQ) \cong H^*(\PGL_3(\CC); \QQ)\]
is trivial as an $\fS_4$-representation.
\end{proposition}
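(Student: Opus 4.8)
The plan is to exhibit an explicit homotopy (indeed a deformation retraction of the group through which the action factors) trivializing the $\fS_4$-action. By \cref{pgl3homeo}, the choice of a basepoint $x \in \X{4}$ identifies $\X{4}$ with $\PGL_3(\CC)$, and under this identification the $\fS_4$-action becomes an action on $\PGL_3(\CC)$. More precisely, for each $\sigma \in \fS_4$ the permuted tuple $\sigma \cdot x$ again lies in $\X{4}$, so by transitivity there is a unique $g_\sigma \in \PGL_3(\CC)$ with $g_\sigma \cdot x = \sigma \cdot x$; since the two actions commute, $\sigma$ acts on $\X{4} \cong \PGL_3(\CC)$ by \emph{left} translation by $g_\sigma$, and $\sigma \mapsto g_\sigma$ is a group homomorphism $\fS_4 \to \PGL_3(\CC)$. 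In other words, the $\fS_4$-action on $\X{4}$ factors through left translation by a finite subgroup of $\PGL_3(\CC)$.

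The key point is then that left translation by any fixed element $g \in \PGL_3(\CC)$ is homotopic to the identity, because $\PGL_3(\CC)$ is path-connected: choosing a path $\gamma$ from the identity to $g$ gives a homotopy $(h, t) \mapsto \gamma(t) \cdot h$ from $\mathrm{id}$ to left translation by $g$. To make this simultaneously coherent for all of $\fS_4$ — so that we get an honest homotopy-triviality of the action and not merely that each element acts trivially on cohomology — I would instead use that the finite subgroup $\{g_\sigma : \sigma \in \fS_4\}$ lies in the path component of the identity, and appeal to the fact that a continuous action of a discrete group factoring through a path-connected topological group acting on itself by translation is homotopically trivial; concretely one can contract the subgroup to $\{e\}$ along paths and transport. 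Either way, homotopy-triviality of the $\fS_4$-action on $\X{4}$ follows, and the statement about cohomology is then immediate: a homotopically trivial action induces the trivial action on $H^*(\X{4};\QQ)$, and combined with \cref{pgl3homeo} (which gives $H^*(\X{4};\QQ) \cong H^*(\PGL_3(\CC);\QQ)$) we conclude that $H^*(\X{4};\QQ)$ is the trivial $\fS_4$-representation.

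The only genuine subtlety — and the step I would be most careful about — is upgrading "each $\sigma$ acts homotopically trivially" to "the $\fS_4$-action is homotopically trivial" as a statement about the action map $\fS_4 \times \X{4} \to \X{4}$; for the cohomological corollary stated in the proposition this distinction is irrelevant, since one only needs each $\sigma_*$ to be the identity on $H^*$, and that follows at once from path-connectedness of $\PGL_3(\CC)$ together with the homomorphism $\fS_4 \to \PGL_3(\CC)$ constructed above. I would also remark (for use in later sections, cf.\ \cref{tensor-factor}) that this argument shows the $\fS_4$-action extends to the connected group $\PGL_3(\CC)$ acting by left translation, which is the form in which the result is most useful downstream.
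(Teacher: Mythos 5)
Your proposal is correct and takes essentially the same approach as the paper: construct the homomorphism $\sigma \mapsto g_\sigma$ from the torsor structure of $\X{4}$ (via \cref{pgl3homeo}) and conclude from the path-connectedness of $\PGL_3(\CC)$ that the $\fS_4$-action extends to a connected group action, hence is homotopically trivial. Your extra discussion of coherence of the homotopies and the distinction between per-element triviality and triviality of the action map is a reasonable remark but goes beyond what the paper (or the downstream applications) require, as you yourself note.
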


\begin{proof}
Fix a basepoint $x \in \X{4}$ as above.
Then for each $\sigma \in \fS_4$, there is unique element $g_{\sigma} \in \PGL_3(\CC)$ such that $g_{\sigma} \cdot x = \sigma \cdot x$.
The map $\sigma \mapsto g_{\sigma}$ defines a homomorphism $\phi \from \fS_4 \to \PGL_3(\CC)$, and hence the action of the path-connected group $\PGL_3(\CC)$ extends the action of $\fS_4$.
\end{proof}

\begin{remark}
\[H^*(\PGL_3(\CC); \QQ) \cong H^*(S^3 \times S^5; \QQ) = \begin{dcases*} \QQ & if $* = 0, 3, 5, 8$\\
0 & otherwise.
\end{dcases*}\]
The generators in degree $3$ and $5$ have Hodge weight $2$ and $3$, respectively.
\end{remark}

The same $\PGL_3(\CC)$-action on $\X{n}$ is also quite useful for general $n > 4$.
The action is no longer transitive, but it is still free, hence the quotient map 
\[\X{n} \to \X{n}/\PGL_3(\CC) = \vcentcolon \Fib{n}\]
is a principal $\PGL_3(\CC)$-bundle.

\begin{proposition}\label{pgl3-quotient-factors}
For $n > 4$, the bundle $\X{n} \to \Fib{n}$ is trivial.
\end{proposition}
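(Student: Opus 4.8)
The plan is to exhibit an explicit continuous section of the principal $\PGL_3(\CC)$-bundle $\X{n} \to \Fib{n}$; existence of a section makes the principal bundle trivial. The idea is to use the first four points of a configuration: since no three of $x_1, \dots, x_n$ are collinear, in particular no three of $x_1, x_2, x_3, x_4$ are collinear, so $(x_1, x_2, x_3, x_4) \in \X{4}$. By \cref{pgl3homeo}, for any fixed basepoint $y = (y_1, y_2, y_3, y_4) \in \X{4}$ there is a \emph{unique} $g \in \PGL_3(\CC)$ with $g \cdot (x_1, x_2, x_3, x_4) = y$; concretely, $g$ is the projective linear map sending the projective frame $x_1, x_2, x_3, x_4$ to the standard frame, and its matrix entries are rational (hence continuous) functions of homogeneous coordinates for the $x_i$ with nonvanishing denominators on $\X{4}$. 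This defines a continuous, $\PGL_3(\CC)$-equivariant map $r \from \X{n} \to \PGL_3(\CC)$, $r(x_1, \dots, x_n) = g$, satisfying $r(h \cdot \vec{x}) = r(\vec{x}) h^{-1}$.

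From such an equivariant map, the trivialization is immediate: define
\[
\Phi \from \X{n} \longrightarrow \PGL_3(\CC) \times \Fib{n}, \qquad \Phi(\vec{x}) = \bigl(r(\vec{x})^{-1}, \, \pi(\vec{x})\bigr),
\]
where $\pi \from \X{n} \to \Fib{n}$ is the quotient map. This is $\PGL_3(\CC)$-equivariant for the left-translation action on the first factor and the trivial action on the second, and it is continuous with continuous inverse induced by $(h, [\vec{x}]) \mapsto h \cdot r(\vec{x}) \cdot \vec{x}$ (well-defined since $r(\vec{x}) \cdot \vec{x}$ depends only on the $\PGL_3$-orbit of $\vec{x}$, because $r$ is equivariant). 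Equivalently, the section $\Fib{n} \to \X{n}$ picks from each orbit the unique configuration whose first four points form the standard frame $y$; this is well-defined precisely because \cref{pgl3homeo} guarantees uniqueness of the normalizing element.

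The only point requiring a little care is the continuity and well-definedness of $r$, i.e. that the ``send the first four points to the standard frame'' map is genuinely a morphism on all of $\X{4}$ (not just generically). This is where one should either invoke the homeomorphism of \cref{pgl3homeo} directly — the inverse of the orbit map $\PGL_3(\CC) \isom \X{4}$ is exactly $r$ restricted to the first four coordinates, and it is a homeomorphism hence continuous — or write down the classical formula for the frame-normalizing matrix and check its denominators are the relevant $\Delta_{ijk}$, which are nonzero on $\X{4}$. I expect this bookkeeping to be the main (and only mild) obstacle; everything else is the formal fact that a principal bundle with a section, or equivalently with an equivariant map to the structure group, is trivial. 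Note this also proves the stronger statement recorded earlier that $\X{n} \cong \PGL_3(\CC) \times \Fib{n}$.
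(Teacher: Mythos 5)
Your proof is correct and takes essentially the same route as the paper: project to the first four coordinates, use \cref{pgl3homeo} to get a unique normalizing element of $\PGL_3(\CC)$, and observe that this yields a section (equivalently, an equivariant map to the structure group), which trivializes the principal bundle. You spell out the equivariance and the explicit trivialization map in more detail than the paper, but the argument is the same.
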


\begin{proof}
Fix a basepoint $x \in \X{4}$.
Given an $n$-tuple $y = (y_1,\dots,y_n) \in \X{n}$, projecting to the first four coordinates gives $y' = (y_1, \dots, y_4) \in \X{4}$, hence by \cref{pgl3homeo} there is a unique and continuous choice of $g(y) \in \PGL_3(\CC)$ such that $g(y) \cdot y' = x$.
Then $y \mapsto g(y) y$ descends to a section, and a principal $G$-bundle with a section is trivial.
\end{proof}

This argument identifies the quotient $\Fib{n}$ with the fiber of the projection $\X{n} \to \X{4}$ to the first four coordinates.
Of course we could choose to project to any four coordinates, i.e. along any inclusion $[4] \injectsto [n]$.
In fact, given a choice of basepoint in $\X{4}$ and an inclusion $[4] \injectsto [n]$, we obtain an injection 
\[H^*(\PGL_3(\CC); \QQ) \cong H^*(\X{4}; \QQ) \to H^*(\X{n}; \QQ) \dispunct.\]
Since $\X{4}$ is connected, the image does not depend on the choice of basepoint, but a priori it could depend on the choice of inclusion $[4] \injectsto [n]$ and not be stable under $\fS_n$.
As the following result states, this is not the case.

\begin{proposition}\label{pglStability}
For $n \ge 4$, the image of $H^*(\X{4}; \QQ) \to H^*(\X{n}; \QQ)$ is independent of the inclusion $[4] \injectsto [n]$ and is trivial as an $\fS_n$-representation.
\end{proposition}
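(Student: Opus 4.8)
Here is the plan. The proposition follows from the \emph{stronger} assertion that for every $4$-element subset $S\subseteq[n]$ the pullback
$\pi_S^{*}\colon H^{*}(\X{4};\QQ)\to H^{*}(\X{n};\QQ)$
along the projection $\pi_S\colon\X{n}\to\X{4}$ forgetting the coordinates outside $S$ is \emph{one and the same} map, independent of $S$. First one must see that $\pi_S^{*}$ is well defined: if $\iota,\iota'\colon[4]\injectsto[n]$ have equal image then $\iota'=\iota\circ\tau$ for a unique $\tau\in\fS_4$, and $\pi_{\iota\circ\tau}$ equals the composite of $\pi_\iota$ with the action of $\tau^{-1}$ on $\X{4}$, which is trivial on $H^{*}(\X{4};\QQ)$ by \cref{s4-action-trivial}. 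Next, for $\sigma\in\fS_n$ one has the identity $\pi_\iota\circ\sigma=\pi_{\sigma^{-1}\circ\iota}$ of maps $\X{n}\to\X{4}$, hence $\sigma^{*}\circ\pi_S^{*}=\pi^{*}_{\sigma^{-1}(S)}$. So once all the $\pi_S^{*}$ coincide, to a common map $\iota^{*}$ say, the image $W\vcentcolon=\im\iota^{*}$ is automatically independent of $S$ and $\sigma^{*}\circ\iota^{*}=\iota^{*}$ for every $\sigma$, i.e. $\sigma^{*}$ fixes $W$ pointwise; that is exactly the proposition. (For $n=4$ there is nothing further to prove beyond \cref{s4-action-trivial}.)

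It remains to prove $\pi_S^{*}=\pi_{S'}^{*}$ for all $S,S'$. Any two $4$-element subsets of $[n]$ are joined by a chain of $4$-subsets in which consecutive ones share exactly three elements, so it suffices to treat the case $S\cap S'=\{a,b,c\}$. Here I would invoke the trivialization of \cref{pgl3-quotient-factors} \emph{adapted to $S$} (its proof works for any four of the coordinates, not just the first four): fixing a basepoint $x\in\X{4}$ one gets a homeomorphism $\X{n}\cong\PGL_3(\CC)\times\Fib{n}$ under which, after identifying $\X{4}\cong\PGL_3(\CC)$ via $g\mapsto g\cdot x$ as in \cref{pgl3homeo}, the map $\pi_S$ becomes the first-factor projection $(g,f)\mapsto g$ — because the distinguished section $\Fib{n}\to\X{n}$ has its $S$-coordinates constantly equal to $x$. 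Tracing that same section through $\pi_{S'}$, the $S'$-coordinates of a section point agree with $x$ on $\{a,b,c\}$ and differ only in the fourth slot; writing $(x_a,x_b,x_c,\text{that fourth point})$ as $t\cdot x$ forces $t$ into the stabilizer $T$ of the (non-collinear, hence generic) triple $x_a,x_b,x_c$, which is a maximal torus $T\cong(\CC^{*})^{2}$. Thus $\pi_{S'}$ becomes the map $m\colon(g,f)\mapsto g\cdot\gamma(f)$ for a continuous $\gamma\colon\Fib{n}\to T\injectsto\PGL_3(\CC)$.

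Finally I would compare $m$ with the first-factor projection on rational cohomology. Factor $m=\mu\circ(\id\times(\iota_T\circ\gamma))$ with $\mu$ the multiplication of $\PGL_3(\CC)$ and $\iota_T\colon T\injectsto\PGL_3(\CC)$. Using that $H^{*}(\PGL_3(\CC);\QQ)=\Lambda_\QQ(\xi_3,\xi_5)$ is an exterior algebra on primitive generators $\xi_i$ in degrees $i=3,5$ (Hopf's theorem), the coproduct $\mu^{*}$ sends $\xi_i\mapsto\xi_i\otimes1+1\otimes\xi_i$; and since $H^{\ge 3}(T;\QQ)=0$ the inclusion $\iota_T$ kills $H^{>0}$, so $(\iota_T\gamma)^{*}$ kills $H^{>0}(\PGL_3(\CC);\QQ)$. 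Hence $m^{*}(\xi_i)=\xi_i\otimes1$, and $m^{*}$ being a ring map it equals the first-factor pullback $\beta\mapsto\beta\otimes1$. Therefore $\pi_S^{*}=\pi_{S'}^{*}$, which completes the induction along the chain and hence the proof. The only genuine work is the bookkeeping of the previous paragraph — unwinding the section to identify $\pi_{S'}$ with $(g,f)\mapsto g\gamma(f)$ and checking that $\gamma$ lands in a maximal torus; everything else is formal, and it is precisely the gap between the degrees of the generators of $H^{*}(\PGL_3(\CC))$ and the dimension of $T$ that makes the argument go through.
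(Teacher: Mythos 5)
Your proof is correct, but it takes a genuinely different route from the paper's, and in fact proves a stronger statement. The paper reduces to $n=5$ (any two $4$-subsets of $[n]$ differing by a transposition lie in a common $5$-subset), then invokes \cref{cor:pglImageInvariance} --- proved later in \cref{fiber-hyperplane-complement} by a weight comparison in \'etale cohomology: the injective maps $\pi_I^*$ preserve the Frobenius eigenspaces $H^i_w$, and $\dim H^i_w(\X{4})=\dim H^i_w(\X{5})$ whenever the left side is nonzero --- to see that the image is independent of the inclusion, hence $\fS_5$-stable; it finishes with the group-theoretic remark that the kernel of the $\fS_5$-action on the image is a normal subgroup containing $\fS_4$ and therefore equals $\fS_5$.

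You instead show that the pullback maps $\pi_S^*$ themselves coincide, not merely that their images do, by a self-contained topological argument: trivialize the $\PGL_3(\CC)$-bundle adapted to $S$, observe that $\pi_{S'}$ then takes the form $(g,f)\mapsto g\gamma(f)$ with $\gamma$ valued in the pointwise stabilizer $T\cong(\CC^*)^2$ of the three shared points, and use that the primitive generators of $H^*(\PGL_3(\CC);\QQ)$ live in degrees $3$ and $5$ while $H^{>2}(T;\QQ)=0$ to conclude $m^*=\pr_1^*$. This avoids the paper's forward reference into \cref{point-counts} and works uniformly in $n$; the trade-off is more bookkeeping in producing $\gamma$. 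Both parts of the proposition --- independence of $S$ and triviality of the $\fS_n$-action on the image --- drop out of the stronger claim via the identity $\sigma^*\circ\pi_S^*=\pi_{\sigma^{-1}(S)}^*$, exactly as you note.
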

\begin{proof}
The case $n = 4$ is \cref{s4-action-trivial}, so suppose $n > 4$.
Then any two inclusions $[4] \injectsto [n]$ that differ by a transposition in $\fS_n$ factor through a single inclusion $[5] \injectsto [n]$.
Since $\fS_n$ is generated by transpositions, it is enough to prove the claim for $n = 5$.
By \cref{cor:pglImageInvariance}, the image is stable under $\fS_5$, and by \cref{s4-action-trivial}, $\fS_4$ (as a subgroup of $\fS_5$ determined by the choice of inclusion) acts trivially.
So the kernel of this action has to contain $\fS_4$, and hence must be all of $\fS_5$.
\end{proof}

Combining \cref{pglStability} with \cref{pgl3-quotient-factors} gives the following.
\begin{proposition}\label{tensor-factor}
With the trivial action on $H^*(\PGL_3(\CC); \QQ)$, the isomorphism
\[H^*(\X{n}; \QQ) \cong H^*(\PGL_3(\CC); \QQ) \tensor H^*(\Fib{n} ; \QQ)\]
is $\fS_n$-equivariant.
\end{proposition}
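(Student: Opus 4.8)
The plan is to build the Künneth isomorphism out of the explicit trivialization of \cref{pgl3-quotient-factors} and then control its two tensor factors by two different inputs from the excerpt: the $\Fib{n}$-factor by the equivariance of the $\PGL_3(\CC)$-quotient map, and the $\PGL_3(\CC)$-factor by \cref{pglStability}. Fix a basepoint $x \in \X{4}$. By \cref{pgl3-quotient-factors} this choice determines a homeomorphism $\X{n} \cong \PGL_3(\CC) \times \Fib{n}$; let $p_1$ and $p_2$ denote the two projections. The Künneth isomorphism is then the cross-product map $a \tensor b \mapsto p_1^* a \smile p_2^* b$, and it suffices to show that this linear isomorphism intertwines the $\fS_n$-action on $H^*(\X{n};\QQ)$ with the action on $H^*(\PGL_3(\CC);\QQ) \tensor H^*(\Fib{n};\QQ)$ that is trivial on the first factor and the natural one on the second.

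First I would handle the $\Fib{n}$-factor. By \cref{sn-action-on-pgl3-quotient} the quotient map $\X{n} \to \Fib{n} = \X{n}/\PGL_3(\CC)$ is $\fS_n$-equivariant, and under the trivialization this map is precisely $p_2$. Hence $p_2^* \from H^*(\Fib{n};\QQ) \to H^*(\X{n};\QQ)$ is $\fS_n$-equivariant, i.e.\ $\sigma \cdot p_2^* b = p_2^*(\sigma \cdot b)$ for all $\sigma \in \fS_n$.

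Next the $\PGL_3(\CC)$-factor, which is the delicate point since the trivialization is only $\fS_4$-equivariant, so one cannot simply transport the $\fS_n$-action through it. Unwinding the construction of \cref{pgl3-quotient-factors}, the projection $p_1$ is the composite of the forgetful map $\X{n} \to \X{4}$ (remembering only the first four points) with the inverse of the orbit homeomorphism of \cref{pgl3homeo}; in particular the image of $p_1^* \from H^*(\PGL_3(\CC);\QQ) \to H^*(\X{n};\QQ)$ coincides with the image of $H^*(\X{4};\QQ) \to H^*(\X{n};\QQ)$ along the inclusion $[4] \injectsto [n]$. By \cref{pglStability} that image is a trivial $\fS_n$-subrepresentation: it is $\fS_n$-stable and the action on it is trivial, so $\sigma \cdot p_1^* a = p_1^* a$ for all $\sigma \in \fS_n$.

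Finally I would assemble the pieces. Since $\fS_n$ acts on $\X{n}$ by homeomorphisms, the cup product on $H^*(\X{n};\QQ)$ is $\fS_n$-equivariant, so for $\sigma \in \fS_n$, $a \in H^*(\PGL_3(\CC);\QQ)$, $b \in H^*(\Fib{n};\QQ)$ we obtain
\[
\sigma \cdot \parenth{p_1^* a \smile p_2^* b} = \parenth{\sigma \cdot p_1^* a} \smile \parenth{\sigma \cdot p_2^* b} = p_1^* a \smile p_2^*(\sigma \cdot b),
\]
using the two previous steps. This is exactly the assertion that the cross-product map is equivariant for the stated actions, and since it is already known to be a linear isomorphism, the proposition follows. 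The main obstacle is the $\PGL_3(\CC)$-factor: the trivialization genuinely fails to be $\fS_n$-equivariant, so the triviality and $\fS_n$-stability of $p_1^* H^*(\PGL_3(\CC);\QQ)$ cannot be read off the product structure and must instead be extracted from the independence-of-inclusion statement in \cref{pglStability}.
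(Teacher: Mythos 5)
Your proof is correct and spells out exactly what the paper leaves implicit in its one-line remark "Combining \cref{pglStability} with \cref{pgl3-quotient-factors} gives the following": you use the trivialization of \cref{pgl3-quotient-factors} to define the Künneth map, the equivariance of the quotient map $\X{n} \to \Fib{n}$ for the second factor, and \cref{pglStability} for the triviality and $\fS_n$-stability of the image of $p_1^*$. This matches the paper's intended argument, with the details (identification of $p_1$ with the forgetful map to $\X{4}$ composed with the inverse orbit map, equivariance of cup product) carefully filled in.
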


\begin{remark}
For $n > 4$, one can verify that there is no way to define an $\fS_n$-action on $\PGL_3(\CC)$ so that the isomorphism $\X{n} \cong \Fib{n} \cross \PGL_3(\CC)$ is $\fS_n$-equivariant. But as \cref{tensor-factor} shows, the induced isomorphism on rational cohomology behaves as if it were induced by such an $\fS_n$-equivariant map.
\end{remark}

\section{Twisted point counts}
\label{point-counts}

\subsection{Grothendieck--Lefschetz trace formula}

Following methods of Church-Ellenberg-Farb \cite{CEF14}, we show how knowledge of certain ``twisted'' point counts for the varieties $\U{n}(\FF_q)$ can be used to compute the rational cohomology $H^*(\X{n}; \QQ)$ as $\fS_n$-representations, at least when $n = 5$ and $6$.

Given an $\ell$-adic sheaf $\mathcal{V}$ on an $n$-dimensional variety $X$ defined over $\FF_q$ (with $\ell$ and $q$ coprime), the \emph{Grothendieck--Lefschetz trace formula} says that
\begin{equation}
\label{eq:GLTraceFormula}
	\sum_{p \in X(\FF_q)} \tr (\Frob_q \mid \mathcal{V}_p) = \sum_{i} \tr (\Frob_q \from H^{2n-i}_{\text{ét}, c}(X; \mathcal{V})) \dispunct,
\end{equation}
where $H^*_{\text{ét}, c}$ denotes compactly supported étale cohomology.

The definitions of $\X{n}$ and $\U{n}$ in the previous section are just the complex points of varieties, defined over $\ZZ$, which we continue to denote by the same notation.
For the variety $\X{n}$, the $\fS_n$-action defines an $\fS_n$-Galois cover $\X{n} \to \U{n}$.
This establishes a natural correspondence between the (finite-dimensional) representations of $\fS_n$ and those (finite-dimensional) local systems on $\U{n}$ whose pullbacks to $\X{n}$ are trivial.
Every such local system determines an $\ell$-adic sheaf, since every irreducible representation of $\fS_n$ is defined over $\ZZ$.

For an irreducible $\fS_n$-representation $V$ and its corresponding local system $\mathcal{V}$, the action of $\Frob_q$ on the stalk $\mathcal{V}_p \iso V$ is as follows.
A point $p \in \U{n}(\FF_q)$ is a set $\{p_1, \dots, p_n\} \subset \PP^2(\widebar{\FF}_q)$ that belongs to $\U{n}(\widebar{\FF}_q)$ (i.e. no three $p_i$ are collinear) and is fixed setwise by $\Frob_q$.
So $\Frob_q$ permutes these $n$ points and hence determines (up to conjugacy, unless given an ordering of the $n$-points) a permutation $\sigma_p \in \fS_n$.
Then $\Frob_q$ acts on the $\fS_n$-representation $V \iso \mathcal{V}_p$ as $\sigma_p$.
If $\chi_V$ is the character for the representation, then $\tr(\Frob_q \mid \mathcal{V}_p) = \chi_V(\sigma_p)$, and the left-hand side of equation (\ref{eq:GLTraceFormula}) becomes
\[\sum_{p \in \U{n}(\FF_q)} \chi_V(\sigma_p) \dispunct.\]

For a conjugacy class $C \in \Class(\fS_n)$, let $1_C$ be the class function on $\fS_n$ that is the indicator function for $C$.
Then $\chi_V(\sigma_p) = \sum_{C} \chi_V(C) 1_C(\sigma_p)$ and
\begin{equation}\label{lhs}
\begin{split}
	\sum_{p \in \U{n}(\FF_q)} \chi_V(\sigma_p)
		& = \sum_{p}\sum_{C} \chi_V(C) 1_C(\sigma_p) = \sum_C \chi_V(C) \sum_p 1_C(\sigma_p) \\
		& = \sum_C \chi_V(C) p_{n,C}(q)
\end{split}
\end{equation}
where \[p_{n,C}(q) = \abs{\set{p \in \U{n}(\FF_q)}{\sigma_p \in C}} \dispunct.\]

Analyzing the right-hand side of (\ref{eq:GLTraceFormula}), let $\widetilde{\mathcal{V}}$ be the pullback of $\mathcal{V}$ to $\X{n}$. Then $\widetilde{\mathcal{V}}$ is trivial, and by transfer and Poincaré duality ($\X{n}$ is smooth):
\begin{equation}\label{GL-trace-formula-RHside}
\begin{split}
	H^{2n-i}_{\text{ét}, c}(\U{n}; \mathcal{V})
		& \cong H^{2n-i}_{\text{ét}, c}(\X{n}; \widetilde{\mathcal{V}})^{\fS_n}\\
        & \cong (H^{2n-i}_{\text{ét}, c}(\X{n}; \QQ_\ell) \tensor V)^{\fS_n}\\
        & \cong H^{2n-i}_{\text{ét}, c}(\X{n}; \QQ_\ell) \tensor_{\QQ_{\ell}[\fS_n]} V	\\
		& \cong \Hom_{\QQ_{\ell}[\fS_n]}(V, H^{2n-i}_{\text{ét}, c}(\X{n}; \QQ_\ell))\\
        & \cong \Hom_{\QQ_{\ell}[\fS_n]}(V, H^{i}_{\text{ét}}(\X{n}; \QQ_\ell)^{\vee} \tensor \QQ_\ell(-n))
\end{split}
\end{equation}
where $\QQ_\ell(-n)$ is the $n$th \emph{cyclotomic character}, i.e. the vector space $\QQ_{\ell}$ with a (geometric) $\Frob_q$ action by $q^{n}$.

Letting $H^i_w(\X{n})$ be the subspace of $H^i_{\text{ét}}(\X{n}; \QQ_\ell)$ on which $\Frob_q$ acts by $q^w$ and letting $\chi^i_w(\X{n})$ be the character of this representation, \cref{GL-trace-formula-RHside} lets us compute the trace of $\Frob_q$ as:
\begin{align*}
	\Tr(\Frob_q : \Hom_{\QQ[\fS_n]}(V, H^{i}_{\text{ét}}(\X{n}; \QQ_\ell)^{\vee} \tensor \QQ_\ell(-n)))
		& = \Tr(\Frob_q : \Hom_{\QQ[\fS_n]}(V, \bigoplus_w (H^i_w(\X{n}))^{\vee} \tensor \QQ_\ell(-n)))\\
		& = \sum_{w} q^{n-w} \dim(\Hom_{\QQ[\fS_n]}(V, (H^i_w(\X{n}))^{\vee} \tensor \QQ_\ell(-n)))	\\
		& = \sum_{w} q^{n-w} \gen{\chi_V, \widebar{\chi}^i_w(\X{n}) \chi_{\QQ_\ell(-n)}}_{\fS_n}\\
		& = \sum_{w} q^{n-w} \gen{\chi_V, \chi^i_w(\X{n})}_{\fS_n}
\end{align*}
The right-hand side of equation (\ref{eq:GLTraceFormula}) then becomes
\begin{equation}\label{rhs}
	\sum_{i,w} q^{n-w} (-1)^i \gen{\chi_V, \chi^i_w(\X{n})}.
\end{equation}
Combining \cref{lhs,rhs} gives
\begin{equation}
\label{eq:pointCount}
	\sum_C \chi_V(C) p_{n,C}(q) = \sum_{i,w} q^{n-w} (-1)^i \gen{\chi_V, \chi^i_w(\X{n})}.
\end{equation}

Since both sides of this equation are linear over the space of class functions on $\fS_n$, and since the irreducible characters form a basis for this space, \cref{eq:pointCount} holds for a general class function $\chi$:
\begin{equation}
\label{eq:characterFormula}
	\sum_C \chi(C) p_{n,C}(q) = \sum_{i,w} q^{n-w} (-1)^i \gen{\chi, \chi^i_w(\X{n})}
\end{equation}

\subsection{Comparison with singular cohomology}
Given a (finite-dimensional) $\fS_n$ representation $V$ over $\QQ_\ell$, we can get a sheaf $\mathcal{V}_{\text{an}}$ on the complex points of $B^0_n(\CC)$ that trivializes on pulling back to $F^0_n(\CC)$.
Further, there are comparison theorems (see~e.g.~\cite[Théorème 1.4.6.3, Théorème 7.1.9]{Deligne77}) that imply isomorphisms away from finitely many characteristics:
\[H^{*}_{\text{ét}}(B^0_n; \mathcal{V}) \cong H^*(B^0_n(\CC); \mathcal{V}_{\text{an}}) \cong H^*_{\text{sing}}(B^0_n(\CC); V) \]
where the local coefficients $V$ are given by the action $\pi_1(B^0_n(\CC)) \to \fS_n$ acting on $V$.
Transfer gives an isomorphism 
\[H^*_{\text{sing}}(B^0_n(\CC); V) \cong H^*_{\text{sing}}(F^0_n(\CC); \QQ_\ell) \tensor_{\fS_n} V \dispunct. \]
Since $V$ is defined over $\QQ$ in the sense that $V = V_\QQ \tensor \QQ_\ell$ for some $\fS_n$ representation $V_\QQ$ over $\QQ$,
\[H^*_{\text{sing}}(F^0_n(\CC); \QQ_\ell) \tensor_{\fS_n} V = (H^*_{\text{sing}}(F^0_n(\CC); \QQ) \tensor_{\fS_n} V) \tensor \QQ_\ell \dispunct.\]
These isomorphisms also preserve the weight filtration, relating the action of Frobenius on étale cohomology and the mixed Hodge structure on the singular (or de Rham) cohomology.
Thus, the $\chi^i_w$ are exactly the characters of the degree $i$, weight $w$ part of $H^*_{\text{sing}}(F^0_n(\CC); \QQ)$ as an $\fS_n$ representation.

\subsection{Representation polynomials}

As before, for $X$ a variety defined over $\FF_q$, denote by $H^i_w(X)$ the $q^w$-eigenspace of $\Frob_q$ acting on $H^i_{\text{ét}}(X; \QQ_\ell)$.
For a group $G$ acting on $X$, each $H^i_w(X)$ is invariant under the action of $G$ and so gives rise to a $G$-representation.
Denote by $\chi^i_w(X)$ the character of this representation, and define the following two variable polynomial with coefficients in the class functions on $G$:
\[P_{X}(x,t) = \sum_{i,w} \chi^i_w(X) x^it^w\]

Extending the inner product of class functions linearly over the space of polynomials with coefficients in the ring of class functions, we can write equation (\ref{eq:characterFormula}) as
\[\sum_C \chi(C) \frac{p_C(q)}{q^n} = \gen{\chi, p_{\X{n}}\parenth{-1, q^{-1}}} \dispunct.\]

For a direct product $X \cross Y$ with an isomorphism of $G$-representations $H^*_{\text{ét}}(X \cross Y) \iso H^*_{\text{ét}}(X) \tensor H^*_{\text{ét}}(Y)$, there is a factorization
\[P_{X \cross Y} = P_{X} \cdotX P_{Y} \dispunct.\]
By \cref{tensor-factor},
\begin{equation}
\label{eq:poincareHodgePoly}
\begin{split}
	P_{\X{n}}(x,t)
		& = P_{\PGL_3}(x,t) \cdot P_{\Fib{n}}(x,t)\\
		& = (1 + x^3t^2 + x^5 t^3 + x^8 t^5) \cdot P_{\Fib{n}}(x,t) \dispunct.
\end{split}
\end{equation}
When $n = 5$ or $6$, we will see that $H^i_{\text{ét}}(\Fib{n}; \QQ_{\ell}) = H^i_i (\Fib{n})$, so
\begin{equation}
\label{eq:pureWeights}
	P_{\Fib{n}}(x,t) = \sum_{k} \chi_{n,k} x^k t^k
\end{equation}
where $\chi_{n,k}$ is the character of $H^k_{\text{ét}}(\X{n}; \QQ_{\ell})$ as an $\fS_n$-representation.
Combining this with \cref{eq:pointCount},
\begin{equation}
\label{eq:characterequation}
	\sum_C \chi_V(C) p_{n,C}(q) = q^n \sum_{k} q^{-k} (-1)^k \parenth{\inner*{\chi_V}{\chi_{n,k}} - \inner*{\chi_V}{\chi_{n,k-2}} + \inner*{\chi_V}{\chi_{n,k-3}} - \inner*{\chi_V}{\chi_{n,k-5}}} \dispunct.
\end{equation}
Since $\chi_{n,k} = 0$ for $k < 0$, complete knowledge of the point counts $p_{n,C}(q)$ allows for an inductive computation of the characters $\chi_{n,k}$.

\begin{remark}
The right-hand side of \cref{eq:characterequation} is a polynomial in $q$ with integer coefficients.
The same is then true of the left-hand side, and for an irreducible representation $V$, the coefficient of $q^{n-w}$ is the alternating sum by degree of the multiplicity of $V$ in $H^i_w(\X{n})$.

Since the irreducible characters $\chi_V$ form a basis of the class functions, we can decompose $1_C = \sum_{j}\alpha_j \chi_{V_j}$ with each $\alpha_j \in \QQ$.
We then see that
\begin{align*}
	p_{n,C}(q)
		& = \sum_{C'} 1_C(C') p_{n,C'}(q)\\
		& = \sum_{i,w} q^{n-w} (-1)^i \inner*{1_C}{\chi^i_w}\\ 
        & = \sum_{i,w} q^{n-w} (-1)^i \inner[\big]{\textstyle{\sum}_j \alpha_j \chi_{V_j}}{\chi^i_w}\\
		& = \sum_j \alpha_j \sum_{i,w} q^{n-w} (-1)^i \gen{\chi_{V_j}, \chi^i_w}
\end{align*}
In particular, it follows that (for $n=5, 6$) each $p_{n,C}(q)$ is a polynomial with rational coefficients.
\end{remark}

\subsection{Fibers as hyperplane arrangements}\label{fiber-hyperplane-complement}

To establish the claim that $H^i_{\text{ét}}\parenth{\Fib{5}; \QQ_{\ell}} = H^i_i(\Fib{5})$, choose a point $e = (e_1, \dots, e_4) \in \X{4}$ and let $\cL_e$ be the arrangement of lines $\set{L_{ij}}{1 \leq i < j \leq 4}$ where $L_{ij}$ is the line passing through $e_i$ and $e_j$.
Then the fiber $\Fib{5}$ of the map $\X{5} \to \X{4}$ over $e$ is precisely
\[\PP^2 \setminus \bigcup_{L \in \cL_e} \ell \iso \AA^2 \setminus \bigcup_{\ell \in \mathcal{L}_{e}'} \ell\]
where $\mathcal{L}_{e}'$ is the configuration of lines obtained from $\mathcal{L}_{e}$ by letting one of the lines $\ell \in \mathcal{L}_e$ be the line at infinity defining $\AA^2 \iso \PP^2 \setminus \ell$.

In general, for a field $k$ and $\{H_1, \dots, H_r\}$ a set of hyperplanes in $\AA^n$, let $\mathcal{A} = \AA^n \setminus \bigcup H_i$ be the complement of the hyperplane arrangement $\mathscr{L} = \bigcup H_i$.
It is known (see \cite{Kim94,Lehrer92}) that (geometric) Frobenius \[\Frob_q \colon H^i_{\text{ét}}\parenth{\mathcal{A}_{/ \bar{k}}; \QQ_{\ell}} \to H^i_{\text{ét}}\parenth{\mathcal{A}_{/ \bar{k}}; \QQ_{\ell}}\] acts as multiplication by $q^i$, which establishes the claim.

\begin{corollary}
\label{cor:pglImageInvariance}
	For each $I, J \colon [4] \injectsto [5]$ and the induced maps $\pi_I, \pi_J \colon \X{5} \to \X{4}$, there is an equality $\pi_I^*\parenth{H^*_{\text{ét}}(\X{4}; \QQ_{\ell})} = \pi_J^*\parenth{H^*_{\text{ét}}(\X{4}; \QQ_{\ell})}$.	
\end{corollary}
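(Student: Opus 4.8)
The plan is to pin down the subspace $\pi_I^*\bigl(H^*_{\text{ét}}(\X{4};\QQ_\ell)\bigr)$ of $H^*_{\text{ét}}(\X{5};\QQ_\ell)$ intrinsically, using the $\Frob_q$-weight grading. The facts to exploit are: $H^*_{\text{ét}}(\X{4};\QQ_\ell)\cong H^*_{\text{ét}}(\PGL_3;\QQ_\ell)$ is an exterior algebra $\Lambda(a_3,a_5)$ whose nonzero $(\text{degree},\text{weight})$-bidegrees are exactly $(0,0)$, $(3,2)$, $(5,3)$, $(8,5)$ --- so in positive degrees the weight is strictly less than the degree --- whereas the fiber $\Fib{5}$, a complement of a hyperplane arrangement as recorded just above the corollary, has pure cohomology $H^c_{\text{ét}}(\Fib{5};\QQ_\ell)=H^c_c(\Fib{5})$. (Over $\FF_q$ the variety $\X{4}$ has $\FF_q$-points, e.g.\ $[1:0:0]$, $[0:1:0]$, $[0:0:1]$, $[1:1:1]$, so \cref{pgl3homeo} gives $\X{4}\cong\PGL_3$ over $\FF_q$, and the weights $2,3$ of $a_3,a_5$ are those of the corresponding classes in the singular cohomology of $\PGL_3(\CC)$ recorded after \cref{s4-action-trivial}.)

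First I would verify that $\pi_I^*$ is injective. The proof of \cref{pgl3-quotient-factors} is algebraic and goes through over $\FF_q$ once a basepoint $x\in\X{4}(\FF_q)$ is chosen; run with the four coordinates indexed by $I$, it produces an isomorphism of $\FF_q$-varieties $\X{5}\isom\PGL_3\times\Fib{5}$ under which $\pi_I$ is the projection to $\PGL_3$ composed with the orbit isomorphism $\PGL_3\isom\X{4}$. Through the Künneth isomorphism $H^*_{\text{ét}}(\X{5};\QQ_\ell)\cong H^*_{\text{ét}}(\PGL_3;\QQ_\ell)\tensor H^*_{\text{ét}}(\Fib{5};\QQ_\ell)$, the map $\pi_I^*$ is thus, up to these isomorphisms, the inclusion $\alpha\mapsto\alpha\tensor 1$ of a tensor factor; in particular it is injective and a ring homomorphism.

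Next I would compute the relevant graded pieces of $H^*_{\text{ét}}(\X{5};\QQ_\ell)$. The Künneth isomorphism respects the weight grading, so together with $H^c_{\text{ét}}(\Fib{5};\QQ_\ell)=H^c_c(\Fib{5})$ it gives
\[H^i_w(\X{5})\cong\bigoplus_{c\ge 0}H^{i-c}_{w-c}(\PGL_3)\tensor H^c_c(\Fib{5}).\]
A short inspection --- using that $H^*(\PGL_3)$ is supported in degrees $0,3,5,8$, where it is pure of weights $0,2,3,5$ respectively --- shows that for each $(i,w)\in\{(3,2),(5,3),(8,5)\}$ only the $c=0$ summand is nonzero, so $H^3_2(\X{5})$, $H^5_3(\X{5})$ and $H^8_5(\X{5})$ are all one-dimensional. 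Since $\pi_I^*$ is an injective ring homomorphism it sends $a_3$, $a_5$, $a_3a_5$ to nonzero elements of $H^3_2(\X{5})$, $H^5_3(\X{5})$, $H^8_5(\X{5})$ respectively, and $\pi_I^*(1)=1$ as $\X{5}$ is connected. Hence
\[\pi_I^*\bigl(H^*_{\text{ét}}(\X{4};\QQ_\ell)\bigr)=\QQ_\ell\oplus H^3_2(\X{5})\oplus H^5_3(\X{5})\oplus H^8_5(\X{5}),\]
which involves no choice of $I$. The same holds with $I$ replaced by $J$, and the two images coincide.

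The main obstacle here is structural rather than computational: the isomorphism $\X{5}\cong\PGL_3\times\Fib{5}$, hence the Künneth splitting that presents $\pi_I^*$ as the inclusion of a tensor factor, genuinely depends on the chosen embedding $I$, so one cannot compare $\pi_I^*$ and $\pi_J^*$ term by term. The weight computation is exactly what removes this dependence: it exhibits the graded subspaces of $H^*(\X{5})$ into which the image of $H^*(\X{4})$ is forced for weight reasons as canonical and one-dimensional, so that every $\pi_I^*$ has the same image.
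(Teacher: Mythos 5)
Your proof is correct and takes essentially the same approach as the paper: use that $\pi_I^*$ is injective (via the product decomposition $\X{5}\cong\PGL_3\times\Fib{5}$ of \cref{pgl3-quotient-factors}) and weight-preserving, then observe that in the bidegrees $(i,w)\in\{(0,0),(3,2),(5,3),(8,5)\}$ where $H^i_w(\X{4})$ is nonzero, it has the same dimension as $H^i_w(\X{5})$, forcing the image to be the intrinsic subspace $\bigoplus_{(i,w)}H^i_w(\X{5})$ independently of $I$. Your version just spells out the Künneth weight count explicitly where the paper cites \cref{eq:poincareHodgePoly,eq:pureWeights}.
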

\begin{proof}
	This is an immediate consequence of the fact that the (injective) maps $\pi_I^*$ preserve the eigenspaces $H^i_w$, and $\dim\parenth{H^i_w(\X{4})} = \dim\parenth{H^i_w(\X{5})}$ whenever $\dim\parenth{H^i_w(\X{4})} > 0$ by \cref{eq:poincareHodgePoly,eq:pureWeights}.
\end{proof}

When $n = 6$, ``forgetting the last point'' defines a fiber bundle
\[\begin{tikzcd}
	\cA_6(\CC) \ar[r, hook]
		&
		\Fib{6}(\CC) \ar[d]
	\\
		&
		\Fib{5}(\CC)
\end{tikzcd}\]
with $\cA_6$ the complement of the hyperplane arrangement determined by the lines joining all ${5 \choose 2} = 10$ pairs of points in a configuration $e \in \Fib{5}$.
As a hyperplane complement bundle over a hyperplane complement, this establishes that $H^i_{i}\parenth{\Fib{6}} = H^i_{\text{ét}}\parenth{\Fib{6}; \QQ_{\ell}}$.
\todo{Multiple sources have confirmed this is true. None have given an explicit citation.}

\begin{remark}\label{bad-projection}
	When $n > 6$, the map $\Fib{n} \to \Fib{n-1}$ is no longer a fibration.
	Since $n-1 \geq 6$, we can choose a configuration $e = (e_1, \dots, e_{n-1}) \in \Fib{n-1,4}$ with $L_{12}$, $L_{34}$, and $L_{56}$ intersecting at a single point (see \cref{bad-configuration-figure}). 
	But then any neighborhood of $e$ contains configurations $e'$ with the lines $L_{12}'$, $L_{34}'$, and $L_{56}'$ intersecting generically.
\end{remark}

\todo{make sure there's not too much space after this remark}

\begin{figure}
\includegraphics{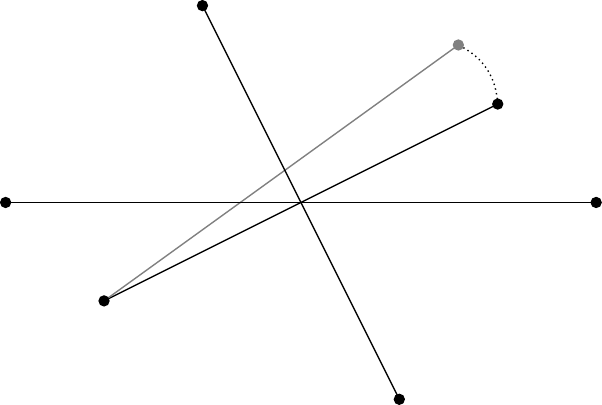}
\caption{In the projection $\X{n} \to \X{n-1}$ with $n \ge 7$, there are special configurations (in black) in $\X{n-1}$ whose fiber has different topology from nearby configurations (in gray).}
\label{bad-configuration-figure}
\end{figure}
\todo{make sure it appears in a good location}

\subsection{Point count results and the proof of \texorpdfstring{Theorem 1.1}{\cref{main-theorem}}}

\begin{table}
\begin{tabular}{ll}\toprule
Conjugacy Class (C) & $p_{5,C}(q)$ \\\midrule
e & $\frac{1}{120} (q-3)(q-2)(q-1)^2 q^3 (q+1) (q^2+q+1)$\\
(12) & $\frac{1}{12} (q-1)^3 q^4 (q+1) (q^2+q+1)$\\
(12)(34) & $\frac{1}{8} (q-2) (q-1)^2 q^3 (q+1)^2 (q^2+q+1)$\\
(123) & $\frac{1}{6} (q-1)^2 q^4 (q+1)^2 (q^2+q+1)$\\
(123)(45) & $\frac{1}{6} (q-1)^3 q^4 (q+1) (q^2+q+1)$\\
(1234) & $\frac{1}{4} (q-1)^2 q^4 (q+1)^2 (q^2+q+1)$\\
(12345) & $\frac{1}{5} (q-1)^2 q^3 (q+1) (q^2+1) (q^2+q+1)$\\\bottomrule
\end{tabular}
\bigskip
\caption{Point counts for $\U{5}(\FF_q)$ twisted by conjugacy classes of $\fS_5$.}
\label{s5-counts-table}
\end{table}
\todo{make sure they appear in a good location}

\begin{table}
\begin{tabular}{ll}\toprule
Conjugacy Class (C) & $p_{6,C}(q)$ \\\midrule
e & $\frac{1}{720} (q-3)(q-2)(q-1)^2 q^3 (q+1) (q^2+q+1) (q^2-9q+21)$\\
(12) & $\frac{1}{48} (q-1)^3 q^4 (q+1) (q^2+q+1) (q^2-3q+3)$\\
(12)(34) & $\frac{1}{6} (q-2) (q-1)^2 q^3 (q+1)^2 (q^2+q+1) (q^2-q-3)$\\
(12)(34)(56) & $\frac{1}{48} (q-1)^2 q^3 (q+1) (q^2+q+1) (q^4-6q^2+q+8)$\\
(123) & $\frac{1}{18} (q-1)^2 q^6 (q+1)^2 (q^2+q+1)$\\
(123)(45) & $\frac{1}{6} (q-1)^3 q^6 (q+1) (q^2+q+1)$\\
(123)(456) & $\frac{1}{18} (q-1)^2 q^3 (q+1) (q^2+q+1) (q^4-2q^3-3q+9)$\\
(1234) & $\frac{1}{8} (q-1)^2 q^4 (q+1)^2 (q^2+q+1) (q^2+q-1)$\\
(1234)(56) & $\frac{1}{8} (q-1)^2 q^3 (q+1) (q^2+q+1) (q^4-2q^2-q-2)$\\
(12345) & $\frac{1}{5} (q-1)^2 q^3 (q+1) (q^2+1) (q^2+q+1)^2$\\
(123456) & $\frac{1}{6} (q-1)^2 q^3 (q+1) (q^2+q+1) (q^4+q-1)$\\\bottomrule
\end{tabular}
\bigskip
\caption{Point counts for $\U{6}(\FF_q)$ twisted by conjugacy classes of $\fS_6$}
\label{s6-counts-table}
\end{table}

In \cref{counting}, we will determine the point counts listed in \cref{s5-counts-table,s6-counts-table}.
Recall that $p_{n,C}(q)$ stands for the number of sets $\{p_1,\dots,p_n\} \in \U{n}(\widebar{\FF}_q)$ on which $\Frob_q$ acts by an element of the conjugacy class $C$ of $\fS_n$.
Using this data and \cref{eq:characterequation}, we compute that
\begin{align*}
	\chi_{5,0} & = \chi_U\\
	\chi_{5,1} & = \chi_{(3,2)} \\
	\chi_{5,2} & = \chi_{\bigwedge^2 V}\\
\end{align*}
and
\begin{align*}
	\chi_{6,0} & = \chi_U\\
	\chi_{6,1} & = \chi_{(33)} + \chi_{(42)}\\
	\chi_{6,2} & = \chi_{V} + 2\chi_{\bigwedge^2 V} + \chi_{\bigwedge^3 V} + \chi_{(33)} + 2\chi_{(321)}\\
	\chi_{6,3} & = \chi_{V} + 3\chi_{\bigwedge^2 V} + 3\chi_{\bigwedge^3 V} + \chi_{(33)} + \chi_{(222)} + 2\chi_{(42)} + 2\chi_{(2211)} + 3\chi_{(321)}\\
	\chi_{6,4} & = \chi_U + \chi_{U'} + \chi_{V} + \chi_{V'} + \chi_{\bigwedge^2 V} + 2\chi_{\bigwedge^3 V} + 2\chi_{(33)} + 3\chi_{(222)} + 2\chi_{(42)} + \chi_{(2211)} + 3\chi_{(321)}
\end{align*}
These are exactly the representations in \cref{main-theorem}.
For the weights, it's enough to note that the cohomology is generated by hyperplane classes in degree $1$ and weight $1$.
In the above, $\chi_U$ stands for the trivial character, $V$ stands for the fundamental character, and $W$ stands for the character of the $2$-dimensional irreducible representation of $\fS_5$.
The characters subscripted by partitions are the characters of the corresponding Specht modules.

\section{Twisted point-counting}
\label{counting}
Recall that a point $p \in \U{n}(\FF_q)$ is represented as a set of distinct elements $\{p_1, \dots, p_n\} \subset {\PP}^2(\widebar{\FF}_q)$ that is fixed setwise by the action of $\Frob_q$.
The action of $\Frob_q$ on the ordered set $(p_1, \dots, p_n)$ defines (up to conjugacy, depending on the choice of ordering) an element of $\fS_n$, and we denote (a representative of) the cycle type of this element as $\sigma_p$.
For each conjugacy class $C \in \Class(\fS_n)$, we want to count the number of points
\[p_{n,C}(q) = \abs[\big]{\set[\big]{p \in \U{n}(\FF_q)}{\sigma_p \in C}}\dispunct.\]

For brevity of notation, denote the Frobenius automorphism $\Frob_q$ by $f$ (or $f_q$ if we need to emphasize the prime power $q$).
For all $n, N \in \NN$, the space $\PP^N(\FF_{q^n})$ is $f_q$-equivariantly isomorphic to the subspace $(\PP^N(\widebar{\FF}_q))^{f^n} \subset \PP^N(\widebar{\FF}_q)$, and $\PP^N(\widebar{\FF}_q) = \bigcup_n \PP^N(\FF_{q^n})$.
Any point $p \in \PP^N(\widebar{\FF}_q)$ has a finite Frobenius orbit $(p, f(p), \dots, f^{n-1}(p))$ where $n$ is minimal such that $p \in \PP^N(\FF_{q^n}) \subset \PP^N(\widebar{\FF}_q)$.
Let $\{f(p)\}$ denote the $f$-orbit of $p$, and let $n(p) = \abs{\{f(p)\}}$.
For $n = n(p)$, we call $p$ a \emph{$q^n$-point}, and we will write $p^{(n)}$ when we wish to emphasize that $p$ is a $q^n$-point.

Define the set of $q^n$-points:
\begin{align*}
	\qnpoints{N}{n}
		& = \set{p \in \PP^N\parenth{\widebar{\FF}_q}}{n(p) = n}
	\\
		& = \PP^N\parenth{\FF_{q^n}} \setminus \bigcup_{k \mid n, k < n} \PP^N\parenth{\FF_{q^k}}
	\\
		& = \PP^N\parenth{\FF_{q^n}} \setminus \bigcup_{k \mid n, k < n} \PP^N\parenth{\FF_{q}^{(k)}} \dispunct.
\end{align*}
The last equality implies that
\begin{equation}\label{qnPointCounts}
	\abs{\qnpoints{N}{n}} = \abs{\PP^N\parenth{\FF_{q^n}}} - \sum_{k \mid n, k<n} \abs{\qnpoints{N}{k}} \dispunct,
\end{equation}
which allows for a recursive computation of $\abs{\qnpoints{N}{n}}$.

A set $\{p_1, \dots, p_n\}$ fixed by Frobenius can be decomposed into Frobenius orbits
\[\{p_1, \dots, p_n\} = \{f(x_1)\} \union \dots \union \{f(x_k)\}\]
with $n(x_1) + \dots + n(x_k) = n$.
The cycle type $\sigma_p$ corresponds to the partition $(n(x_1), \dots, n(x_k)) \vdash n$.

Now let $\ell \subset \PP^N(\widebar{\FF}_q)$ be a hyperplane and let $\omega_{\ell} \in \PP^N(\widebar{\FF}_q)^{\vee}$ be the corresponding functional.
The correspondence $\ell \leftrightarrow \omega_{\ell}$ is $f$-equivariant, so $\ell \leftrightarrow \omega_{\ell} \in \PP^N(\FF_{q^n})^{\vee}$ if and only if $f^n(\ell) = \ell$.
Let $\{f(\ell)\}$ be the $f$-orbit of $\ell$, and let $n(\ell) = n(\omega_{\ell}) = \abs{\{f(\ell)\}}$.
For $n = n(\ell)$, we call $\ell$ a \emph{$q^n$-hyperplane} (or a \emph{$q^n$-line}, since we only deal with $N=2$).

If $\ell$ is a $q$-hyperplane in $\PP^N(\widebar{\FF}_q)$ then there is an $f$-equivariant isomorphism $\ell \iso \PP^{N-1}(\widebar{\FF}_q)$.
In particular, the number of $q^n$-points on $\ell$ agrees with the number of $q^n$-points in $\PP^{N-1}(\widebar{\FF}_q)$.
More generally, for $\ell$ a $q^n$-hyperplane, there is an $f_{q^n}$-equivariant isomorphism $\ell \iso \PP^{N-1}(\widebar{\FF}_{q^n})$.
Dually, given a $q^n$-point $p$, the space of hyperplanes through $p$ is $f_{q^n}$-equivariantly isomorphic to $\PP^{N-1}(\widebar{\FF}_{q^n})$.

If we wish to do the point count $p_{6, C}(q)$ for $C = (123)(45)$, we may now think of this, roughly, as counting the number of ways we can choose a $q^3$-point, a $q^2$-point, and a $q$-point of $\PP^2$.
The choice of such a triple $(a^{(3)}, b^{(2)}, c^{(1)})$ determines the set $p = \{f(a)\} \union \{f(b)\} \union \{c\}$, which by construction has cycle type $\sigma_p = C$.
Different choices may determine the same element -- for example, the triple $(f(a), b, c)$ determines the same set $p$ as above -- so we will need to correct for such overcounting.
But more significantly, we always require that the resulting set $p$ contains no colinear triples.

When making the choice of the $q^3$-point $a$, for example, we require that the Frobenius orbit $\{f(a)\}$ is not contained in a line.
If we have already selected some points, then we additionally require that the Frobenius orbit of the line through $a$ and $f(a)$ does not contain any of those previously selected points.
In order to make good choices and avoid generating any accidental colinearities, we therefore need to understand the incidence relations among points and lines and their Frobenius orbits in $\PP^2(\widebar{\FF}_q)$.

For a pair of distinct points $p_1, p_2 \in \PP^2(\widebar{\FF}_q)$, we let $\gen{p_1,p_2}$ denote the unique line containing $p_1$ and $p_2$.
Dually, for a pair of distinct lines $\ell_1$ and $\ell_2$, we let $\gen{\ell_1, \ell_2}$ denote the unique point contained in both $\ell_1$ and $\ell_2$.
We will often need answers to the following two basic questions (and their dual statements):
\begin{enumerate}
	\item Given a pair of distinct points $p_1$ and $p_2$ and the size of their Frobenius orbits, $n(p_i) = n_i$, what is the size of the Frobenius orbit of $\gen{p_1, p_2}$?
	\item Given a pair of distinct points $p$ and $f^r(p)$ from a Frobenius orbit of size $n(p) = n$, what is the size of the Frobenius orbit of $\gen{p, f^r(p)}$?
\end{enumerate}
The following lemmas provide the possible answers to these questions.

\begin{lemma}\label{lineFromTwoPoints}
	Let $\ell^{(k)} = \gen*{p_1^{(n_1)}, p_2^{(n_2)}}$. Then $k \mid \lcm(n_1, n_2)$, and for each $i$ either $n_i \mid k$ or $k \mid n_i$.
\end{lemma}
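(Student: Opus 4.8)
The plan is to argue purely with the combinatorics of Frobenius orbits, using only two ingredients: first, the defining property of the orbit size, namely that for a point $p$ (resp.\ a line $\ell$) one has $f^j(p) = p$ (resp.\ $f^j(\ell) = \ell$) if and only if $n(p) \mid j$ (resp.\ $n(\ell) \mid j$); and second, the elementary geometric fact that $f$ commutes with the span operation, i.e.\ $f\gen{a,b} = \gen{f(a), f(b)}$ for distinct points $a,b$. The latter holds because $f$ acts on homogeneous coordinates by the coordinatewise map $x \mapsto x^q$, which in characteristic $p$ is both additive and multiplicative, hence carries $2$-dimensional linear subspaces to $2$-dimensional linear subspaces, compatibly with spans.

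For the statement $k \mid \lcm(n_1, n_2)$: put $m = \lcm(n_1, n_2)$. Since $n_i \mid m$ we get $f^m(p_i) = p_i$, and therefore $f^m(\ell) = \gen{f^m(p_1), f^m(p_2)} = \gen{p_1, p_2} = \ell$. By the defining property of $n(\ell)$ this forces $k = n(\ell) \mid m$.

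For the dichotomy, it suffices to treat $i = 1$ (the argument for $i=2$ is identical, and in fact uses only that $p_1$ lies on a $q^k$-line, not the specific description $\ell = \gen{p_1,p_2}$). Assume $n_1 \nmid k$, and let $r$ be the residue of $k$ modulo $n_1$, so $0 < r < n_1$. Because $f^k(\ell) = \ell$ and $p_1 \in \ell$, the point $f^k(p_1)$ lies on $\ell$; but $f^k(p_1) = f^r(p_1)$ since $f^{n_1}(p_1) = p_1$, and $f^r(p_1) \neq p_1$ since $0 < r < n(p_1)$. Hence $\ell$ contains the two distinct points $p_1$ and $f^r(p_1)$, so $\ell = \gen{p_1, f^r(p_1)}$. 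Applying $f^{n_1}$ and using $f^{n_1}(p_1) = p_1$ gives $f^{n_1}(\ell) = \gen{p_1, f^{n_1+r}(p_1)} = \gen{p_1, f^r(p_1)} = \ell$, so $k \mid n_1$ by the defining property of $n(\ell)$. Thus whenever $n_1 \nmid k$ we obtain $k \mid n_1$, which is exactly the claimed dichotomy.

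The proof is short, so there is no serious obstacle; the one point that deserves care — and which I would single out — is the justification of $f\gen{a,b} = \gen{f(a),f(b)}$, i.e.\ that the coordinatewise $q$-power map is a ring-semilinear automorphism of $\widebar{\FF}_q^{\,3}$ and hence a collineation of $\PP^2(\widebar{\FF}_q)$. Once this is granted, everything else is the cyclic-group bookkeeping above, and no geometry of $\PP^2$ beyond ``two distinct points span a unique line'' enters.
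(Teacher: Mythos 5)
Your proof is correct and takes essentially the same approach as the paper's: the first part is identical (apply $f^{\lcm(n_1,n_2)}$ to the span), and for the dichotomy both proofs assume $n_i \nmid k$, observe that $\ell$ is then spanned by $p_i$ and a Frobenius iterate of $p_i$, and apply $f^{n_i}$ to conclude $k \mid n_i$. The only cosmetic difference is that you reduce $k$ modulo $n_1$ before writing $\ell = \gen{p_1, f^r(p_1)}$, whereas the paper simply uses $\ell = \gen{p_i, f^k(p_i)}$ directly; the reduction is harmless but unnecessary.
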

\begin{proof}
	Let $d = \lcm(n_1, n_2)$. Then
	\[f^{d}(\ell) = \gen{f^d(p_1), f^d(p_2)} = \gen{p_1, p_2} = \ell \dispunct,\]
	so $k \mid d$.
	
	Now if $f^k(p_i) = p_i$, then $n_i \mid k$. Otherwise, $f^k(p_i) \in f^k(\ell) = \ell$ and $\ell = \gen{p_i, f^k(p_i)}$. Then
	\[f^{n_i}(\ell) = \gen{f^{n_i}(p_i), f^{n_i}\parenth{f^k(p_i)}} = \gen{p_i, f^k(p_i)} = \ell \dispunct,\]
	so $k \mid n_i$.
\end{proof}

\begin{lemma}\label{lineFromFrobeniusPair}
	Let $\ell^{(k)} = \gen{p^{(n)}, f^r(p)}$ where $0 < r < n$. Then $k \divides n$, and if $k \neq n$ then $k \divides r$.
\end{lemma}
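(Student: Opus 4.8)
The plan is to mimic the structure of the proof of \cref{lineFromTwoPoints}, specializing to the case $p_1 = p$, $p_2 = f^r(p)$, where now $n_1 = n_2 = n$ and so the ``$\lcm$'' bound becomes simply $k \divides n$. First I would apply $f^n$ to $\ell = \gen{p, f^r(p)}$: since $f^n(p) = p$ and $f^n(f^r(p)) = f^r(f^n(p)) = f^r(p)$, both points defining $\ell$ are fixed, hence $f^n(\ell) = \ell$ and therefore $k \divides n$. This handles the first assertion.

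For the second assertion, suppose $k \neq n$; I want to show $k \divides r$. The key observation is to track where $p$ and $f^r(p)$ go under $f^k$. Since $f^k(\ell) = \ell$, the pair $\{f^k(p), f^k(f^r(p))\} = \{f^k(p), f^{k+r}(p)\}$ again lies on $\ell$. Now $\ell$ contains only two points of the Frobenius orbit of $p$ in general — more precisely, $\ell \cap \{f(p)\}$ is the set of orbit-points on $\ell$, and applying $f^k$ permutes this finite set. I would argue that $f^k$ must fix $p$: indeed, the line $\ell$ meets the orbit $\{f(p)\}$ in a set that is stable under $f^k$ (because $f^k(\ell)=\ell$); if $f^k(p) \neq p$ then $f^k(p)$ is another orbit-point on $\ell$, so $\ell = \gen{p, f^k(p)}$, and then running the same $f^n$-invariance argument as in \cref{lineFromTwoPoints} (i.e.\ $f^n(\ell) = \gen{f^n(p), f^n(f^k(p))} = \gen{p, f^k(p)} = \ell$, which we already know) does not immediately give a contradiction — so instead I would use minimality. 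The cleaner route: $\gen{p, f^r(p)} = \ell = f^k(\ell) = \gen{f^k(p), f^{k+r}(p)}$, and applying $f^{-r}$ (well-defined on the orbit since $f$ is invertible on $\widebar{\FF}_q$) gives $f^{-r}(\ell) = \gen{f^{k-r}(p), f^k(p)}$. Comparing, I would deduce that the set $\{p, f^r(p)\}$ and the set $\{f^k(p), f^{k+r}(p)\}$ span the same line; since two points determine a line and these are all orbit-points, matching them up forces either ($f^k(p) = p$ and $f^{k+r}(p) = f^r(p)$) — giving $n \divides k$, contradicting $k \neq n$ together with $k \divides n$ unless $k = n$ — or ($f^k(p) = f^r(p)$ and $f^{k+r}(p) = p$), i.e.\ $f^k(p) = f^r(p)$, which says $n \divides k - r$, hence $k \equiv r \pmod n$; combined with $0 \le k \le n$ and $0 < r < n$ this forces $k = r$, so in particular $k \divides r$.

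The main obstacle, and the step I expect to require the most care, is justifying that there is no third possibility in the matching of $\{p, f^r(p)\}$ with $\{f^k(p), f^{k+r}(p)\}$ — i.e.\ ruling out the ``degenerate'' case where these two-element subsets of the orbit coincide only partially, or where orbit-points collide because $n$ is small. One must be careful that $p \neq f^r(p)$ (guaranteed by $0 < r < n$) and that the line $\ell$ really is determined by this pair, and then argue that the $f^k$-action on the (at most few) orbit-points lying on $\ell$ is constrained. I would make this rigorous by noting that $\ell$ is a $q^n$-line's worth of points but the orbit $\{f(p)\}$ has exactly $n$ elements, of which at least $\{p, f^r(p)\}$ lie on $\ell$; the cyclic group generated by $f$ acts on $\{f(p)\}$ simply transitively, and the stabilizer of the subset $\ell \cap \{f(p)\}$ under this cyclic action must contain $f^k$, which pins down $k \bmod n$ in terms of $r$. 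This bookkeeping with the cyclic orbit is the crux; once it is set up, the divisibility conclusions $k \divides n$ and $k \divides r$ drop out.
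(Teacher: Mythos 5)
Your first part (showing $k \divides n$) is correct and identical to the paper's argument. The second part, however, has a genuine gap.

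You reduce to the claim that the two two-element subsets $\{p, f^r(p)\}$ and $\{f^k(p), f^{k+r}(p)\}$ of the orbit, which both span $\ell$, must coincide as sets — leading you to exactly two cases. But this is false: when $k < n$, the line $\ell$ typically contains many points of the orbit, not just two, so spanning the same line does not force these two pairs to be equal. Concretely, take $n = 6$, $r = 4$, $k = 2$: then $f^k(p) = f^2(p)$ is distinct from both $p$ and $f^r(p) = f^4(p)$, so neither of your two cases occurs, and your argument breaks down. Notice also that your case analysis would prove $k = r$, which is strictly stronger than the asserted $k \divides r$ and wrong in exactly the example just given. The statement "$\ell$ contains only two points of the Frobenius orbit of $p$ in general" is where this goes astray.

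The paper's route avoids this entirely: if $k < n$, then $f^k(p) \neq p$ and $f^k(p) \in f^k(\ell) = \ell$, so $\ell = \gen{p, f^k(p)}$. Applying $f^r$ to this expression gives $f^r(\ell) = \gen{f^r(p), f^{r+k}(p)}$; but $f^r(p)$ and $f^{r+k}(p)$ are two (distinct) points of $\ell$, so $f^r(\ell) = \ell$, and hence $k \divides r$. The key step you were missing is to derive $f^r(\ell) = \ell$ directly, rather than trying to control which orbit-points lie on $\ell$. Your instinct to track orbit-points under $f^k$ was reasonable but requires a correct accounting of how many such points lie on $\ell$, which is exactly what makes that route harder than the one the paper takes.
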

\begin{proof}
Clearly $f^n(\ell) = \ell$, so $k \divides n$. If $k < n$, then $\{ p, f^r(p), f^k(p), f^{k+r}(p) \} \subset \ell$, and 
\[\ell = \gen{p, f^k(p)} = \gen{f^r(p), f^{r+k}(p)} = f^r\gen{p, f^k(p)} = f^r(\ell) \dispunct,\]
so $k \divides r$.
\end{proof}

\begin{corollary}\label{lineFromFrobeniusImage}
	For $\ell^{(k)} = \gen{p^{(n)}, f(p)}$, either $k = 1$ or $k = n$. Moreover, $k = n$ precisely when $p$ does not lie on any $q$-line.
\end{corollary}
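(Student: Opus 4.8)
The plan is to obtain both assertions as immediate consequences of \cref{lineFromFrobeniusPair}, applied with $r = 1$. Throughout we may assume $n \ge 2$, since otherwise $p = f(p)$ and the line $\gen{p, f(p)}$ is not defined (so the statement is vacuous). For the first claim, \cref{lineFromFrobeniusPair} with $r = 1$ gives $k \mid n$, and moreover, if $k \ne n$ then $k \mid r = 1$, forcing $k = 1$. Hence the only possibilities are $k = 1$ and $k = n$, and these are genuinely distinct because $n \ge 2$.

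For the second claim I would establish the equivalence that $k = 1$ if and only if $p$ lies on some $q$-line, and then combine with the dichotomy just proved. One direction is immediate: if $k = 1$ then $\ell$ itself is a $q$-line and $p \in \ell$. Conversely, suppose $p$ lies on a $q$-line $m$, so $f(m) = m$. Then $f(p) \in f(m) = m$ as well, so $m$ contains the two distinct points $p$ and $f(p)$; by uniqueness of the line through two points, $m = \gen{p, f(p)} = \ell$, whence $\ell$ is a $q$-line and $k = 1$. Negating, $k \ne 1$ — equivalently $k = n$ — precisely when $p$ lies on no $q$-line.

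I do not expect any real obstacle here: the corollary is a direct specialization of \cref{lineFromFrobeniusPair} together with a one-line incidence argument. The only point deserving a word of care is the degenerate case $n = 1$, which I would dispose of at the outset as indicated above.
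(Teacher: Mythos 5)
Your proposal is correct and takes essentially the same route as the paper's proof: specialize \cref{lineFromFrobeniusPair} to $r=1$ for the dichotomy, then observe that a $q$-line through $p$ is Frobenius-invariant and hence must equal $\gen{p,f(p)}$. The only difference is that you spell out the uniqueness-of-line argument and dispose of the degenerate case $n=1$ explicitly, both of which the paper leaves implicit.
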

\begin{proof}
	This is immediate from \cref{lineFromFrobeniusPair} and the simple fact that if $p$ lies on some $q$-line $\ell^{(1)}$, then $f^r(p) \in f^r(\ell) = \ell$ for all $r$.
\end{proof}

\begin{remark}
There are, as always, analogous statements in the dual setting of a point determined by a pair of lines $p^{(k)} = \gen{\ell^{(n_1)}, \ell^{(n_2)}}$.
\end{remark}

As observed in the proof of \cref{lineFromFrobeniusImage}, a point $p$ that lies on a $q$-line has its Frobenius orbit contained in that same $q$-line.
If $p$ is a $q^k$-point with $k \geq 3$, then $p \in \ell^{(1)}$ immediately gives a forbidden colinearity in the Frobenius orbit of $p$.
It is therefore necessary that we select such $q^k$-points that do no lie on any $q$-line.
Motivated by this requirement, we make the following definition.

\begin{definition}[$q^k$-generic]
	A point $p^{(n)} \in \PP^2(\widebar{\FF}_q)$ is \emph{$q$-generic} if it does not lie on any $q$-line.
	More generally, we will say that $p$ is \emph{$q^k$-generic} if it does not lie on any $q^r$-line for $r \leq k$, and we say that $p$ is \emph{generic} if it does not lie on any $q^r$-line for $r < n$ when $n$ is odd and for $r < n/2$ when $n$ is even.
	(For $n$ even, $p^{(n)}$ always lies on the line $\gen{p, f^{n/2}(p)}$, which is fixed by $f^{n/2}$.)
	
	We make analogous definitions for generic lines in the dual setting, replacing the condition that ``$p$ does not lie on a $q^r$-line'' with the condition that ``$\ell$ does not contain a $q^r$-point''.
\end{definition}

Keeping in mind that our primary motivation is to determine the precise counts $p_{n, C}(q)$, the following claim will be used repeatedly.

\begin{proposition}\label{genericPointCounts}
	For each $n \geq 3$, let $\qngenpoints{2}{n}$ denote the set of generic $q^n$-points. For $n < 6$,
	\[\abs{\qngenpoints{2}{n}} = \abs{\qnpoints{2}{n}} - \abs{\qpoints{2}^{\vee}} \cdot \abs{\qnpoints{1}{n}} \dispunct.\]
	For $n = 6$,
	\[\abs{\qngenpoints{2}{6}} = \abs{\qnpoints{2}{6}} - \abs{\qpoints{2}^{\vee}} \cdot \abs{\qnpoints{1}{6}} - \abs{\qnpoints{2}{2}^{\vee}} \cdot \abs{\PP^1\parenth{\widebar{\FF}_{q^2}^{(3)}}} \dispunct.\]
\end{proposition}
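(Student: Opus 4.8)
The plan is to count generic $q^n$-points by inclusion–exclusion, removing from the set of all $q^n$-points those that fail genericity. A point $p^{(n)}$ fails to be generic precisely when it lies on some $q^r$-line with $r < n$ (for $n$ odd, $r \le n-1$; for $n$ even, $r < n/2$, since lying on the $f^{n/2}$-fixed line $\gen{p, f^{n/2}(p)}$ is automatic and does not count). The key geometric input is \cref{lineFromTwoPoints,lineFromFrobeniusPair,lineFromFrobeniusImage}: if a $q^n$-point lies on a $q^r$-line $\ell$, then (applying the dual of \cref{lineFromTwoPoints}, or directly) $r \divides n$. So the only offending lines are $q^r$-lines with $r$ a proper divisor of $n$ satisfying $r < n/2$ when $n$ is even. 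For $3 \le n \le 5$ the only such divisor is $r = 1$; for $n = 6$ the relevant divisors are $r = 1$ and $r = 2$ (note $3 = 6/2$ is excluded).

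For the $n < 6$ case: a generic $q^n$-point is a $q^n$-point lying on no $q$-line. First I would observe that a $q^n$-point $p$ lying on a $q$-line $\ell^{(1)}$ has its entire Frobenius orbit contained in $\ell$, and conversely any $q^n$-point on a fixed $q$-line contributes. Since a $q$-line $\ell$ carries an $f$-equivariant isomorphism $\ell \iso \PP^1(\widebar{\FF}_q)$, the number of $q^n$-points on it is $\abs{\qnpoints{1}{n}}$. Crucially, because $n \ge 3$, a $q^n$-point cannot lie on two distinct $q$-lines (their intersection $\gen{\ell_1,\ell_2}$ is a single $q$-point, which is not a $q^n$-point for $n \ge 2$), so there is no overcounting: the non-generic $q^n$-points are partitioned by which $q$-line contains them. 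The number of $q$-lines is $\abs{\qpoints{2}^{\vee}}$ by duality. Subtracting gives the stated formula.

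For $n = 6$: the non-generic $q^6$-points are those lying on a $q$-line or a $q^2$-line. A $q^6$-point lying on a $q$-line lies on a unique one (same argument as above, since $6 \ge 2$), contributing $\abs{\qpoints{2}^{\vee}}\cdot\abs{\qnpoints{1}{6}}$. For the $q^2$-lines: a $q^2$-line $\ell$ carries an $f_{q^2}$-equivariant isomorphism $\ell \iso \PP^1(\widebar{\FF}_{q^2})$, under which a $q^6$-point of $\PP^2$ corresponds to a point of $\PP^1(\widebar{\FF}_{q^2})$ with Frobenius ($f_{q^2}$) orbit of size $3$, i.e.\ a point of $\PP^1\parenth{\widebar{\FF}_{q^2}^{(3)}}$; the number of $q^2$-lines is $\abs{\qnpoints{2}{2}^{\vee}}$. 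The remaining point to check — and the one I expect to be the main obstacle — is that the two families do not overlap and that within the $q^2$-line family there is no double counting: a $q^6$-point lying on a $q^2$-line lies on no $q$-line (a $q^6$-point on both a $q^2$-line and a $q$-line would force their intersection point, a $q$- or $q^2$-point, to equal it, impossible), and it lies on a unique $q^2$-line (two distinct $q^2$-lines meet in a point $\gen{\ell_1,\ell_2}^{(k)}$ with $k \divides \lcm(2,2) = 2$ by the dual of \cref{lineFromTwoPoints}, hence $k \le 2 < 6$). Therefore the non-generic $q^6$-points split as a disjoint union over $q$-lines and $q^2$-lines with no point counted twice, and the inclusion–exclusion has no higher terms, yielding the claimed identity.
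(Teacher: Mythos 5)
Your proof is correct and follows the same approach as the paper's: partition the non-generic $q^n$-points by the unique $q$-line (and for $n=6$, unique $q^2$-line) containing them, using the fact that two such lines meet in a point of small Frobenius-orbit size. You are slightly more explicit than the paper in invoking the divisibility $r \divides n$ (via \cref{lineFromFrobeniusPair}) to justify why only $q$-lines (resp.\ $q$- and $q^2$-lines) can occur, where the paper simply asserts that $q$-genericity equals genericity for $3 \le n < 6$; this is a useful clarification but not a different argument.
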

\begin{proof}
	For $3 \leq n < 6$, $q$-generic is equivalent to generic.
	Now since any two distinct $q$-lines intersect in a $q$-point, the non--$q$-generic $q^{(n)}$-points are partitioned (evenly) among the $q$-lines.
	
	For $n = 6$, we need to additionally remove all $q^6$-points that lie on some $q^2$-line.
	Since any pair of distinct $q$- or $q^2$-lines intersect in a $q$- or $q^2$-point, the set of $q^6$-points that lie on some $q^2$-line are partitioned evenly among all the $q^2$-lines and are disjoint from the set of $q^6$-points that lie on some $q$-line.
	The $\Frob_{q^2}$-equivariant isomorphism $\ell^{(2)} \iso \PP^1(\widebar{\FF}_{q^2})$ identifies the $q^6$-points on $\ell \subspace \PP^2(\widebar{\FF}_q)$ with the $(q^2)^3$-points on $\ell \iso \PP^1(\widebar{\FF}_{q^2})$, which determines the last term of given count above.
\end{proof}

\begin{remark}
When $n = 1$ or $2$, every $q^n$-point lies on a $q$-line, so the conditions of being generic or $q$-generic are trivial.
\end{remark}

Finally, we note the following extension of \cref{lineFromFrobeniusImage}, which we will use again and again.

\begin{lemma}\label{genericLineLemma}
	If $p$ is $q$-generic, then the line $\ell = \gen{p, f(p)}$ is $q$-generic.
\end{lemma}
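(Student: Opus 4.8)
The plan is to argue by contraposition: suppose the line $\ell = \gen{p, f(p)}$ fails to be $q$-generic, so it contains some $q$-point $c^{(1)} \in \PP^2(\widebar{\FF}_q)$, and deduce that $p$ is not $q$-generic either. The key observation is that $\ell$ itself has a small Frobenius orbit: by \cref{lineFromFrobeniusImage}, the line $\ell = \gen{p, f(p)}$ satisfies $n(\ell) \in \{1, n\}$ where $n = n(p)$, and $n(\ell) = n$ precisely when $p$ is $q$-generic. So \textbf{if $p$ were $q$-generic}, then $\ell$ would be a genuine $q^n$-line and would contain no $q$-point (a $q$-point on a $q^n$-line would be fixed by both $f$ and $f^n$, hence by $f^{\gcd(1,n)} = f$, forcing... wait, that is automatic; the real point is different). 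Let me restructure.

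First I would recall precisely what $q$-generic for a \emph{line} means: by the definition just given, $\ell$ is $q$-generic iff $\ell$ contains no $q$-point. So I must show: if $p$ is $q$-generic (as a point), then $\ell = \gen{p,f(p)}$ contains no $q$-point. Suppose for contradiction that $c$ is a $q$-point lying on $\ell$. Since $p$ is $q$-generic, \cref{lineFromFrobeniusImage} gives $n(\ell) = n(p) = n \ge 1$; moreover I should note $n \ge 2$, since for $n = 1$ the point $p$ lies on the $q$-line $\gen{p, f(p)}$ and is not $q$-generic (and for $n=1$, $f(p) = p$ so $\gen{p,f(p)}$ is undefined anyway — I will assume $n \geq 2$, which is the only case where the statement has content). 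Now consider the three collinear points $p$, $f(p)$, and $c$ on $\ell$. Apply $f$: the images $f(p)$, $f^2(p)$, and $f(c) = c$ are collinear, i.e. they lie on $f(\ell)$. But $f(p), c \in \ell$ and $f(p) \ne c$ (as $f(p)$ is not a $q$-point, being part of an orbit of size $n \ge 2$), so the line through $f(p)$ and $c$ is exactly $\ell$; hence $f^2(p) \in \ell$. Iterating, $f^k(p) \in \ell$ for all $k$, so the entire Frobenius orbit of $p$ lies on $\ell$ — which in particular shows $\ell = \gen{p, f^r(p)}$ for various $r$, consistent with $n(\ell) \mid n$, fine, but this alone is not yet a contradiction.

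The contradiction comes from looking at $c$ more carefully. Since $c$ is a $q$-point on $\ell$, we have $f(\ell) \ni f(c) = c$; combined with $f(\ell) \ni f(p)$, and since $f(p) \in \ell$ with $f(p) \neq c$, we get $f(\ell) = \gen{f(p), c} = \ell$, so $n(\ell) = 1$. But we established $n(\ell) = n \ge 2$ — contradiction. Therefore no such $c$ exists, and $\ell$ is $q$-generic. The main (and only) obstacle is bookkeeping the degenerate small cases: ensuring $f(p) \neq c$ (true because $c$ is $f$-fixed while $f(p)$ has orbit size $n\ge 2$, so $c \in \{f(p)\}$ would force $n=1$), and handling $n \le 2$ — but for $n = 1$ the hypothesis "$p$ is $q$-generic" already fails, and for $n = 2$, $p^{(2)}$ always lies on the $q$-fixed line $\gen{p, f(p)}$ by the parenthetical remark in the definition of generic, so again $p$ is not $q$-generic and the statement is vacuous. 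Thus the only genuine content is $n \ge 3$, where the argument above applies cleanly, and the lemma in fact follows almost immediately from \cref{lineFromFrobeniusImage} together with the elementary incidence fact that two distinct points determine a unique line.
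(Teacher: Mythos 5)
Your proof is correct, and the key step is identical to the paper's: from $f(c) = c$ and $f(p) \in \ell$ you get $f(\ell) = \gen{f(p), c} = \ell$, so $\ell$ is a $q$-line. The paper then simply observes that $p$ lies on the $q$-line $\ell$ and hence is not $q$-generic (the direct contrapositive), whereas you take a small detour through \cref{lineFromFrobeniusImage} and the casework on $n \le 2$ to reach a contradiction --- valid, but unnecessary once $f(\ell) = \ell$ is in hand.
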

\begin{proof}
	If $\ell = \gen{p, f(p)}$ contains a $q$-point $p'$, then \[\ell = \gen{p, p'} = \gen{f(p), p'} = f\gen{p, p'} = f(\ell) \dispunct.\]
	This says that $\ell$ is a $q$-line, so $p$ is not $q$-generic.
\end{proof}

We now demonstrate our general strategy for the point counts $p_{n, C}(q)$ by analyzing the case of cycle type $C = (123)(45)(6)$.
We want to count the number of ways choosing an ordered $3$-tuple $(a^{(3)}, b^{(2)}, c^{(1)})$ that generates an element of $\U{6}(\FF_q)$ by $\{f(a)\} \union \{f(b)\} \union \{f(c)\}$.
Since different choices of elements from the orbits $\{f(a)\}$ and $\{f(b)\}$ are independent and generate the same element of $\U{6}(\FF_q)$, counting these ordered triples will overcount $p_{6,C}(q)$ by a factor of $3 \cdot 2 = 6$.

To count all such ordered triples $(a,b,c)$, we will count all ways of constructing such a triple one point at a time.
The set of choices for $a$ is precisely the set of generic $q^3$-points $\qngenpoints{2}{3}$, and the cardinality of this set can be determined by \cref{qnPointCounts,genericPointCounts}.

We then need to count all ways of choosing a $q^2$-point $b$ while avoiding any forbidden colinearities.
There are two things that we must avoid:
\begin{enumerate}
	\item The point $b$ cannot lie on any line determined by a pair of points in the orbit $\{f(a)\}$.
	\item The line $\gen{b, f(b)}$ cannot pass through any point in the orbit $\{f(a)\}$.
\end{enumerate}
By \cref{lineFromFrobeniusImage}, the line $\ell = \gen{a, f(a)}$ is a $q^3$-line, and by \cref{lineFromTwoPoints} such a line cannot contain any $q^2$-points.
(A line that contains both a $q^3$-point and a $q^2$-point is either a $q$-line or a $q^6$-line.)
The same is then true of the Frobenius orbits of $\ell$, so condition (1) puts no restrictions on the choice of $b$.

Since $\gen{b, f(b)}$ is always a $q$-line for a $q^2$-point $b$, and since each point in the orbit $\{f(a)\}$ is $q$-generic, condition (2) puts no restrictions on the choice of $b$ either.
We may therefore choose any $q^2$-point $b \in \qnpoints{2}{2}$, the exact number of such points being determined by \cref{qnPointCounts}.

In choosing the final point $c$, we only need to ensure that it does not lie on any of the ten lines determined by pairs of points in the set $\{f(a)\} \union \{f(b)\}$ (see \cref{s6-321}.)
The three lines generated by pairs of points in $\{f(a)\}$ are all $q$-generic by \cref{genericLineLemma}.
The six lines generated by a point of $\{f(a)\}$ and a point of $\{f(b)\}$ are all $q^6$-lines, and by \cref{lineFromTwoPoints} they must all be $q$-generic.
(A line that contains a $q^n$-point for $n = 1$, $2$, and $3$ must be a $q$-line.)
The tenth line $\gen{b, f(b)}$ is a $q$-line, and this puts a non-empty condition on the choice of $c$. (We cannot select any $q$-point not on the line $\gen{b, f(b)}$.)

In total, the choices for $a$, $b$, and $c$ determine the count
\begin{align*}
	p_{6, (123)(45)}(q)
		& = \frac{1}{3 \cdot 2} \cdot \abs{\qngenpoints{2}{3}} \cdot \abs{\qnpoints{2}{2}} \cdot \parenth{\abs{\qpoints{2}} - \abs{\qpoints{1}}}\\
		& = \frac{1}{6} (q-1)^3 q^6 (q+1) (q^2+q+1) \dispunct.
\end{align*}

\subsection{Computing twisted point counts for $\U{5}(\FF_q)$}

\subsubsection{Cycle type $e$}
\label{s5-untwisted}
For $p \in \U{5}(\FF_q)$ with cycle type $\sigma_p = e$, each $p_i \in p$ is a $q$-point. 
An ordering of $p$ gives an element $\widetilde{p} \in \X{5}(\FF_q) \iso \PGL_3(\FF_q) \cross \Fib{5}(\FF_q)$, so we need only to compute $\abs{\Fib{5}(\FF_q)}$.

Recalling that $\Fib{5}(\FF_q)$ is isomorphic to the fiber of the map $\X{5} \to \X{4}$, we determine that $\Fib{5}(\FF_q)$ is the complement of six $q$-lines determined by four $q$-points $\{a_1, \dots, a_4\} \subset p$.
(See \cref{s5-11111}.)
\begin{figure}[h]
\includegraphics{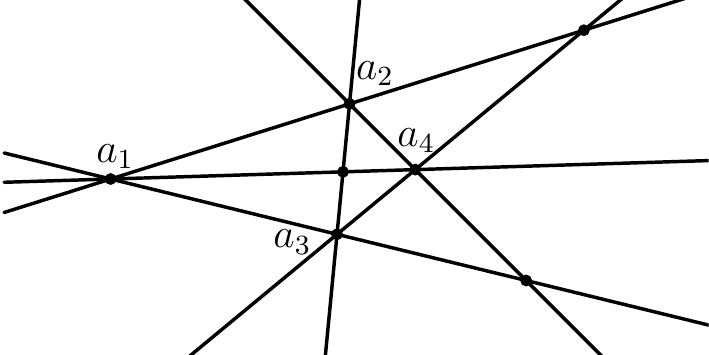}
\caption{[Cycle type $e$] -- The final point $a_5$ can be any $q$-point avoiding the configuration of lines joining pairs of points from $\{a_1, a_2, a_3, a_4\}$.}
\label{s5-11111}
\end{figure}
The six lines meet at four triple intersections (accounting for 12 of the 15 pairs of intersecting lines) and three ordinary intersections. 
By inclusion-exclusion,
\begin{align*}
	\abs{\Fib{5}(\FF_q)}
		& = \abs{\PP^2(\FF_q)} - 6 \abs{\PP^1(\FF_q)} + (4 \cdot 2 + 3)\\
		& = q^2 - 5q +6 \dispunct.
\end{align*}
This gives a count
\begin{align*}
	p_{5,e}(q)
		& = \frac{1}{5!}\parenth{\abs{\PGL_3(\FF_q)} \cdot \abs{\Fib{5}(\FF_q)}}
	\\
		& = \frac{1}{120} (q-3) (q-2) (q-1)^2 q^3 (q+1) (q^2+q+1) \dispunct.
\end{align*}

\begin{remark}
	Applying the trace formula (\cref{eq:GLTraceFormula}) to $\Fib{5}$ with trivial $\QQ_{\ell}$-coefficients gives
\[\abs*{\Fib{5}(\FF_q)} = q^2\parenth[\big]{\dim H^0_{\text{ét}}(\Fib{5}) - \tfrac1q \dim H^1_{\text{ét}}(\Fib{5}) + \tfrac1{q^2}\dim H^2_{\text{ét}}(\Fib{5})} \dispunct.\]
This computes the Poincar\'e polynomial of $\Fib{5}$ to be $1 + 5x + 6x^2$.
\end{remark}

\subsubsection{Cycle type (12)}
Let $p \in \U{5}(\FF_q)$ have cycle type $\sigma_p = (12)$. 
Then $p$ is of the form \[p = \{a^{(2)}, f(a), b_{1}^{(1)}, b_{2}^{(1)}, b_3^{(1)}\} \dispunct.\]

Choosing any $q^2$-point $a$ determines a $q$-line $\ell_1 = \gen{a, f(a)}$.
(See \cref{s5-2111}).
Choosing any $q$-point $b_1$ off of this line determines two distinct $q^2$-lines $\ell_2 = \gen{a, b_1}$ and $\ell_3 = \gen{f(a), b_1}$.
Since any $q^2$-line contains a unique $q$-point (the intersection $\ell^{(2)} \cap f(\ell^{(2)})$), the only additional condition on choosing $b_2$ is the trivial condition that it must be distinct from $b_1$.
Letting $\ell_4^{(1)} = \gen{b_1, b_2}$, the choice of $b_3$ must also lie off of $\ell_4$.
(The intersection of $\ell_1$ and $\ell_4$ is a $q$-point $b'$, which by inclusion-exclusion accounts for the $+1$ in the final term below.)
This selection process distinguishes a point in the orbit $(a, f(a))$ and chooses an ordering for the triple $\{b_1, b_2, b_3\}$, so division by $2 \cdot 3!$ corrects the overcounting.

\begin{figure}[h]
	\includegraphics{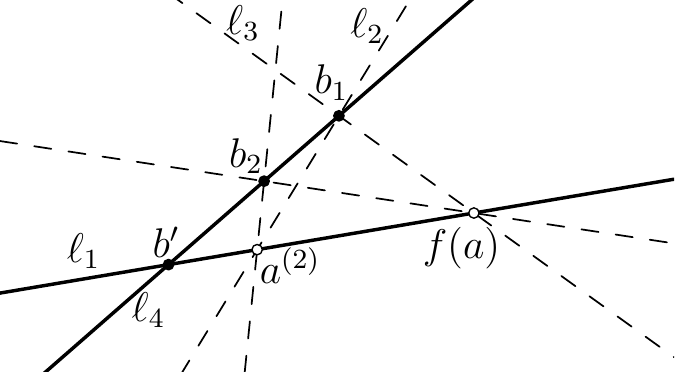}
	\caption{[Cycle type $(12)$] -- The final point $b_3$ can be any $q$-point lying off of the lines $\ell_1$ and $\ell_2$.}
	\label{s5-2111}
\end{figure}

This gives a count
\begin{align*}
p_{5,(12)}(q)
		& = \frac{1}{2 \cdot 3!} \cdot \abs{\qnpoints{2}{2}} \cdot \parenth{\abs{\qpoints{2}} - \abs{\qpoints{1}}} \cdot \parenth{\abs{\qpoints{2}} - \abs{\qpoints{1}} - 1}
	\\
		& \phantom{\,=~} \cdot \parenth{\abs{\qpoints{2}} - 2 \cdot \abs{\qpoints{1}} + 1}
	\\
		& = \frac{1}{12} (q-1)^3 q^4 (q+1) (q^2+q+1) \dispunct.
\end{align*}

\subsubsection{Cycle type (12)(34)}
Let $p \in \U{5}(\FF_q)$ have cycle type $\sigma_p = (12)(34)$.
Then $p$ is of the form \[p = \{a_1^{(2)}, f(a_1), a_{2}^{(2)}, f(a_{2}), b^{(1)}\} \dispunct.\]

Choosing any $q^2$-point $a_1$ determines the $q$-line $\ell_1 = \gen{a_1, f(a_1)}$.
(See \cref{s5-221})
The $q^2$-point $a_2$ can then be selected from any $q^2$-point off of the line $\ell_1$.
This determines a second $q$-line $\ell_2 = \gen{a_2, f(a_2)}$ and two pairs of $q^2$-lines $\left\{\gen{a_1, a_2}, \gen{f(a_1), f(a_2)}\right\}$ and $\left\{\gen{a_1, f(a_2)}, \gen{f(a_1), a_2}\right\}$ that intersect at two distinct $q$-points $\{b', b''\}$ that do not lie on either $\ell_1$ or $\ell_2$.
The choice of the $q$-point $b$ must then lie off of the lines $\ell_1$ and $\ell_2$ (which intersect at some $q$-point $b'''$) and be distinct from $p_1$ and $p_2$.
This selection process distinguishes a point in each orbit $(a_1, f(a_1))$ and $(a_2, f(a_2))$ and chooses an ordering for the cycles $(\{a_1, f(a_1)\}, \{a_2, f(a_2)\})$, so division by $2^3$ corrects the overcounting.

\begin{figure}[h]
	\includegraphics{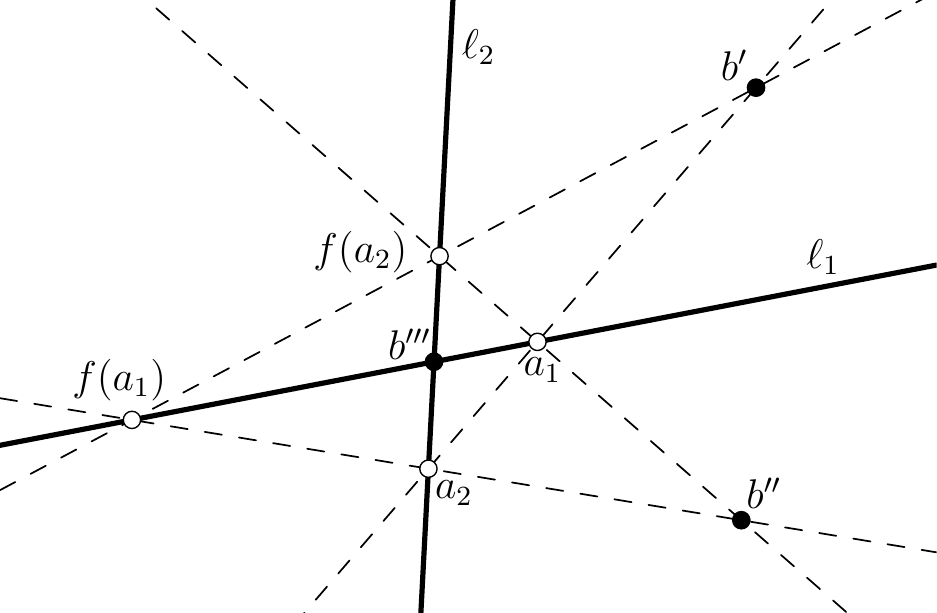}
	\caption{[Cycle type $(12)(34)$] -- The final point $b$ can be any $q$-point lying off of the lines $\ell_1$ and $\ell_2$.}
	\label{s5-221}
\end{figure}

This gives a count
\begin{align*}
	p_{5, (12)(34)}(q)
		& = \frac{1}{2^3} \cdot \abs{\qnpoints{2}{2}} \cdot \parenth{\abs{\qnpoints{2}{2}} - \abs{\qnpoints{1}{2}}} \cdot \parenth{\abs{\qpoints{2}} - 2 \cdot \abs{\qpoints{1}} - 1}
	\\
		& = \frac{1}{8} (q-2) (q-1)^2 q^3 (q+1)^2 (q^2+q+1) \dispunct.
\end{align*}

\subsubsection{Cycle type (123)}
Let $p \in \U{5}(\FF_q)$ have cycle type $\sigma_p = (123)$.
Then $p$ is of the form \[p = \{a^{(3)}, f(a), f^2(a), b_1^{(1)}, b_2^{(1)}\} \dispunct.\]

Choosing any $q$-generic $q^3$-point $a$ determines a $q$-generic $q^3$-line $\ell^{(3)}$ (by \cref{lineFromFrobeniusImage}) and its $f$-orbits.
(See \cref{s5-311})
So there are no conditions on choosing the $q$-point $b_1$, which determines three $q^3$-lines each containing the single $q$-point $b_1$.
The second $q$-point $b_2$ must then only be distinct from $b_1$.
This selection process distinguishes a point in the orbit $(a, f(a), f^2(a))$ and chooses an order for the pair $\{b_1, b_2\}$, so division by $3 \cdot 2!$ corrects the overcounting.

\begin{figure}[h]
	\includegraphics{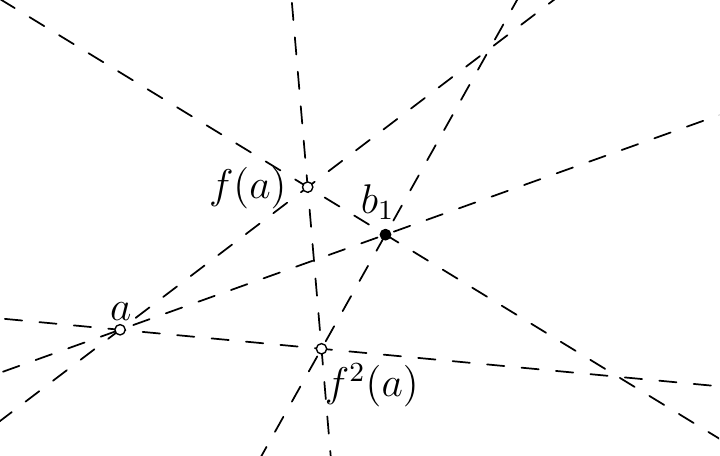}
	\caption{[Cycle type $(123)$] -- The final point $b_2$ can be any $q$-point distinct from $b_1$.}
	\label{s5-311}
\end{figure}

This gives a count
\begin{align*}
	p_{5, (123)}(q)
		& = \frac{1}{3 \cdot 2} \cdot \abs{\qngenpoints{2}{3}} \cdot \abs{\qpoints{2}} \cdot \parenth{\abs{\qpoints{2}} - 1}
	\\
		& = \frac{1}{6} (q-1)^2 q^4 (q+1)^2 (q^2+q+1) \dispunct.
\end{align*}

\subsubsection{Cycle type (123)(45)}
Let $p \in \U{5}(\FF_q)$ have cycle type $\sigma_p = (123)(45)$. Then $p$ is of the form \[p = \{a^{(3)}, f(a), f^2(a), b^{(2)}, f(b)\} \dispunct.\]

As above, choosing a $q$-generic $q^3$-point determines an orbit of generic $q^3$-lines.
So there are no conditions on choosing a $q^2$-point $b$.
This selection process distinguishes a point in each of the orbits $(a, f(a), f^2(a))$ and $(b, f(b))$, so division by $3 \cdot 2$ corrects the overcounting.

This gives a count
\begin{align*}
	p_{5, (123)(45)}(q)
		& = \frac{1}{3 \cdot 2} \cdot \abs{\qngenpoints{2}{3}} \cdot \abs{\qnpoints{2}{2}}\\
		& = \frac{1}{6} (q-1)^3 q^4 (q+1) (q^2+q+1) \dispunct.
\end{align*}

\subsubsection{Cycle type (1234)}
Let $p \in \U{5}(\FF_q)$ have cycle type $\sigma_p = (1234)$. 
Then $p$ is of the form \[p = \{a^{(4)}, f(a), f^2(a), f^3(a), b^{(1)}\} \dispunct.\]

Choosing a $q$-generic $q^4$-point $a$ determines four $q$-generic $q^4$-lines (the line $\gen{a, f(a)}$ and its orbit) and a pair of $q^2$-lines containing a single $q$-point $b'$ at their intersection.
(See \cref{s5-41}.)
The $q$-point $b$ must therefore only be distinct from this intersection.
This selection process distinguishes a point in the orbit $(a, f(a), f^2(a), f^3(a))$, so division by $4$ corrects the overcounting.

\begin{figure}[h]
	\includegraphics{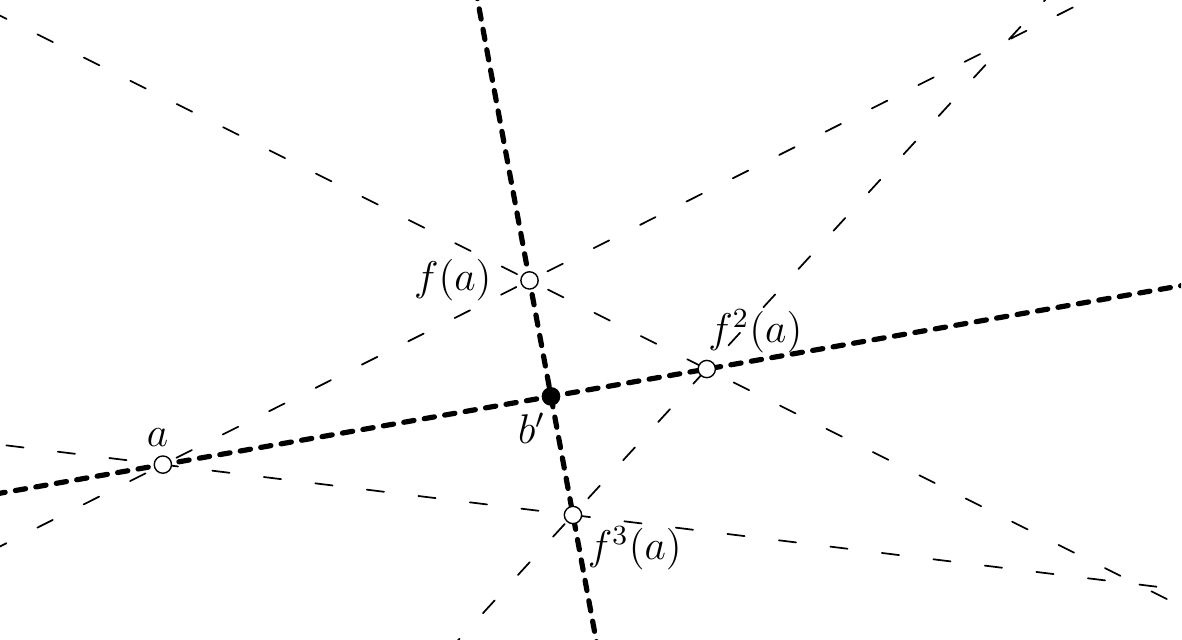}
	\caption{[Cycle type $(1234)$] -- The final point $b$ can be any $q$-point distinct from $b'$.}
	\label{s5-41}
\end{figure}

This gives a count
\begin{align*}
	p_{5, (1234)}(q)
		& = \frac{1}{4} \cdot \abs{\qngenpoints{2}{4}} \cdot \parenth{\abs{\qpoints{2}} - 1}
	\\
		& = \frac{1}{4} (q-1)^2 q^4 (q+1)^2 (q^2+q+1) \dispunct.
\end{align*}

\subsubsection{Cycle type (12345)}
Let $p \in \U{5}(\FF_q)$ have cycle type $\sigma_p = (12345)$. 
Then $p$ is of the form \[p = \{a^{(5)}, f(a), f^2(a), f^3(a), f^4(a)\} \dispunct.\]

We need only choose a $q$-generic $q^5$-point and divide by $5$ to correct for the overcounting.
This gives a count
\begin{align*}
	p_{5, (12345)}(q)
		& = \frac{1}{5} \abs{\qngenpoints{2}{5}}
	\\
		 & = \frac{1}{5} (q-1)^2 q^3 (q+1) (q^2+1) (q^2+q+1) \dispunct.
\end{align*}

\subsection{Computing twisted point counts for $\U{6}(\FF_q)$}

For each cycle $C$ that has a corresponding class in $\fS_5$, we only need to count the ways of choosing an additional $q$-point from a given $p \in \U{5}(\FF_q)$ of the corresponding cycle type.
This sixth point must be chosen off of the ten lines determined by $p$, so in each of these cases we count the total number of $q$-points on these ten lines.

\subsubsection{Cycle type $e$}
\label{s6-untwisted}
For $p \in \U{5}(\FF_q)$ of cycle type $e$, the ten lines are all $q$-lines with a total of 45 intersection pairs.
(See \cref{s6-111111}.) 
Each of the five points comprising $p$ is at an intersection of 4 lines, accounting for a total of 30 intersection pairs.
The remaining 15 points of intersection are all distinct, so there are
\[10 \cdot \abs{\qpoints{1}} - (5 \cdot (4-1) + 15 \cdot (2-1)) = 10q - 20\]
total $q$-points on these lines.

\begin{figure}[h]
	\includegraphics{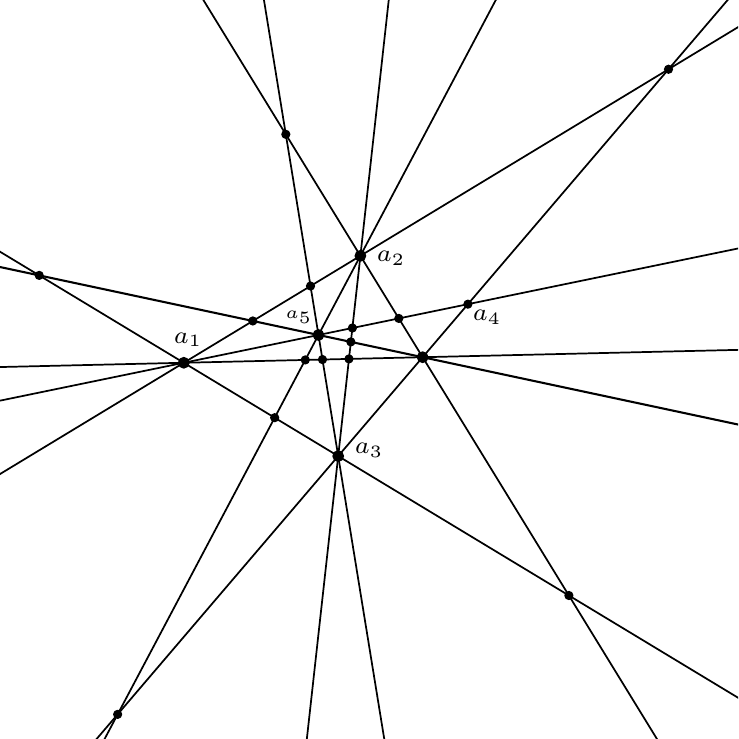}
	\caption{[Cycle type $e$] -- The final point $a_6$ can be any $q$-point avoiding the configuration of lines joining pairs of points from $\{a_1, a_2, a_3, a_4, a_5\}$}
	\label{s6-111111}
\end{figure}

This gives a count
\begin{align*}
	p_{5, e}(q)
		& = \frac{1}{6!} \cdot  ( 5! \cdot p_{5, e}(q) ) \cdot \parenth{\abs{\qpoints{2}} - (10q - 20)}\\
		&= \frac{1}{720} (q-3) (q-2) (q-1)^2 q^3 (q+1) (q^2+q+1) (q^2-9q+21) \dispunct.
\end{align*}

\begin{remark}
	Applying the trace formula (\cref{eq:GLTraceFormula}) to $\Fib{6}$ with trivial $\QQ_{\ell}$-coefficients gives
\begin{align*}
	\abs{\Fib{6}(\FF_q)}
		& = (q^2 - 5q + 6)(q^2-9q+21) = q^4 - 14q^3 + 72q^2 - 159q + 126\\
		& = q^4\left(\dim\parenth{H^0_{\text{ét}}(\Fib{6})} - q^{-1}\dim\parenth{H^1_{\text{ét}}(\Fib{6})} + q^{-2}\dim\parenth{H^2_{\text{ét}}(\Fib{6})}\right.\\
		& \phantom{= q^4(} \left. - q^{-3}\dim\parenth{H^3_{\text{ét}}(\Fib{6})} + q^{-4}\dim\parenth{H^4_{\text{ét}}(\Fib{6})}\right) \dispunct.
\end{align*}
This computes the Poincar\'e polynomial of $\Fib{6}$ to be $1 + 14x + 72x^2 + 159x^3 + 126x^4$.
\end{remark}

\subsubsection{Cycle type (12)}
Refer to \cref{s6-21111}.

For $p \in \U{5}(\FF_q)$ of cycle type $(12)$, the three pairs of $q$-points determine three $q$-lines, and the pair of $q^2$-points determines another.
These four lines intersect in six distinct $q$-points.
The remaining six lines joining a $q$-point with a $q^2$-point are $q^2$-lines that contain no new $q$-points.
The total number of $q$-points on these lines is therefore
\[4 \cdot \abs{\qpoints{1}} - 6 = 4q - 2 \dispunct.\]

\begin{figure}[h]
	\includegraphics{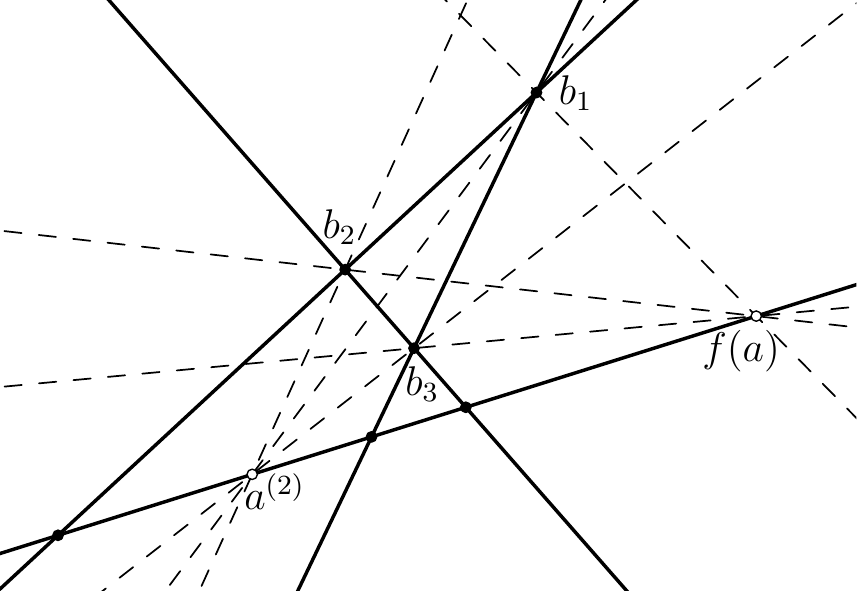}
	\caption{[Cycle type $(12)$] -- The final point $b_4$ can be any $q$-point avoiding the configuration of solid lines (the $q$-lines).}
	\label{s6-21111}
\end{figure}

This gives a count
\begin{align*}
	p_{6,(12)}(q)
		& = \frac{1}{2 \cdot 4!} (12 \cdot p_{5, (12)}(q)) \cdot \parenth{\abs{\qpoints{2}} - (4q - 2)}\\
		& = \frac{1}{48} (q-1)^3 q^4 (q+1) (q^2+q+1)(q^2 - 3q + 3) \dispunct.
\end{align*}

\subsubsection{Cycle type (12)(34)}
Refer to \cref{s6-2211}.

For $p \in \U{5}(\FF_q)$ of cycle type $\sigma_p = (12)(34)$, the four lines generated by joining one of the $q^2$-points $\{a_1, f(a_1), a_2, f(a_2)\}$ to the $q$-point $b_1$ are $q^2$-lines containing the single $q$-point $b_1$.
Each of the Frobenius orbit pairs $\gen{a_i, f(a_i)}$ determines a $q$-line, and they intersect in a $q$-point $b'$.
The remaining four lines are formed by joining points from the distinct Frobenius orbits $\{f(a_1)\}$ and $\{f(a_2)\}$.
These four lines are two sets of Frobenius orbits of $q^2$-lines, and they contain only two $q$-points, $b''$ and $b'''$, at the intersections of the two orbits pairs.
The total number of $q$-points on these lines is therefore
\[\parenth{2 \cdot \abs{\qpoints{1}} - 1} + 3 = 2q + 4 \dispunct.\]

\begin{figure}[h]
	\includegraphics{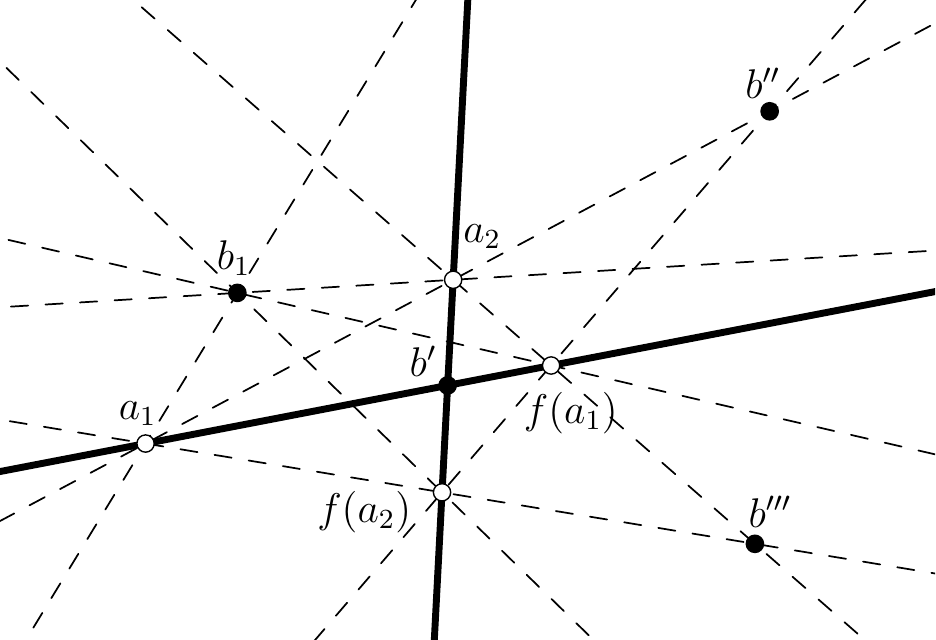}
	\caption{[Cycle type $(12)(34)$] -- The final point $b_2$ can be any $q$-point avoiding the solid lines ($q$-lines) and solid points ($q$-points).}
	\label{s6-2211}
\end{figure}

This gives a count
\begin{align*}
	p_{6, (12)(34)}(q)
		& = \frac{1}{16} \cdot (8 \cdot p_{5, (12)(34)}(q)) \cdot \parenth{\abs{\qpoints{2}} - (2q + 4)}\\
		& = \frac{1}{16} (q-2) (q-1)^2 q^3 (q+1)^2 (q^2+q+1) (q^2 - q - 3) \dispunct.
\end{align*}

\subsubsection{Cycle type (12)(34)(56)}
Refer to \cref{s6-222}.

Let $p \in \U{6}$ have cycle type $(12)(34)(56)$. Then $p$ is of the form \[p = \{a_1^{(2)}, f(a_1), a_2^{(2)}, f(a_2), a_3^{(2)}, f(a_3) \} \dispunct.\]

We can choose $a_1$ to be any $q^2$-point.
This determines a $q$-line, and we can choose $a_2$ to be any $q^2$-point off of this line.
The four $q^2$-points $\{a_1, f(a_1), a_2, f(a_1)\}$ determine six lines, two of which are $q$-lines and four of which are $q^2$-lines.
Each of the four $q^2$-points lies at a triple intersection (and get triple counted when totaling the $q^2$-points on the six lines), and the remaining three intersections are all $q$-points.
There are thus a total of
\[2 \cdot \abs{\qnpoints{1}{2}} + 4 \cdot \parenth{\abs{\PP^1(\FF_{q^2})} - 1} - 4 \cdot (3 - 1) = (6q^2 - 2q - 8)\]
$q^2$-points on these lines, and $a_3$ can be any other $q^2$-point.

\begin{figure}[h]
	\includegraphics{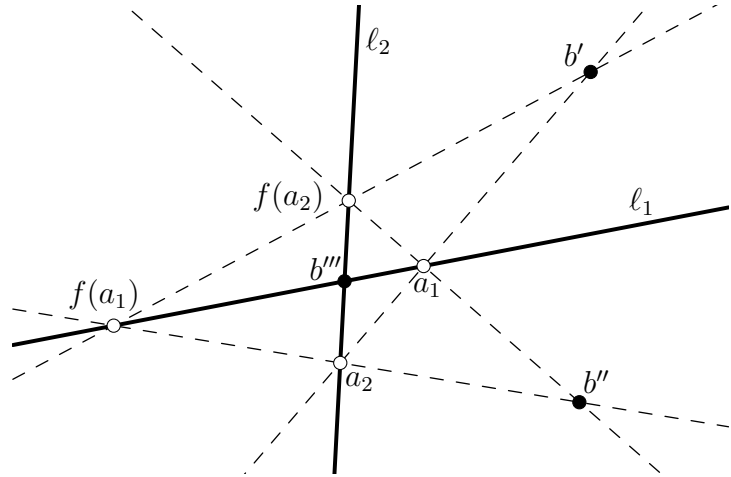}
	\caption{[Cycle type $(12)(34)(56)$] -- The final point $a_3$ can be any $q^2$-point avoiding the solid lines ($q$-lines) and dashed lines ($q^2$-lines).}
	\label{s6-222}
\end{figure}

This gives a count
\begin{align*}
	p_{6, (12)(34)(56)}(q)
		& = \frac{1}{48} \abs{\qnpoints{2}{2}} \cdot \parenth{\abs{\qnpoints{2}{2}} - \abs{\qnpoints{1}{2}}} \cdot \parenth{\abs{\qnpoints{2}{2}} - (6q^2 - 2q - 8)}\\
		& = \frac{1}{48} (q-1)^2 q^3 (q+1) (q^2+q+1) (q^4-6q^2+q+8) \dispunct.
\end{align*}

\subsubsection{Cycle type (123)}
Refer to \cref{s6-3111}.

For $p \in \U{5}(\FF_q)$ of cycle type $\sigma_p = (123)$, nine of the ten lines are $q^3$-lines that contain a total of two $q$-points, the points $b_1, b_2 \in p$.
This pair of points determines a $q$-line, so the total number of $q$-points on the ten lines is
\[\abs{\qpoints{1}} = q + 1 \dispunct.\]

\begin{figure}[h]
	\includegraphics{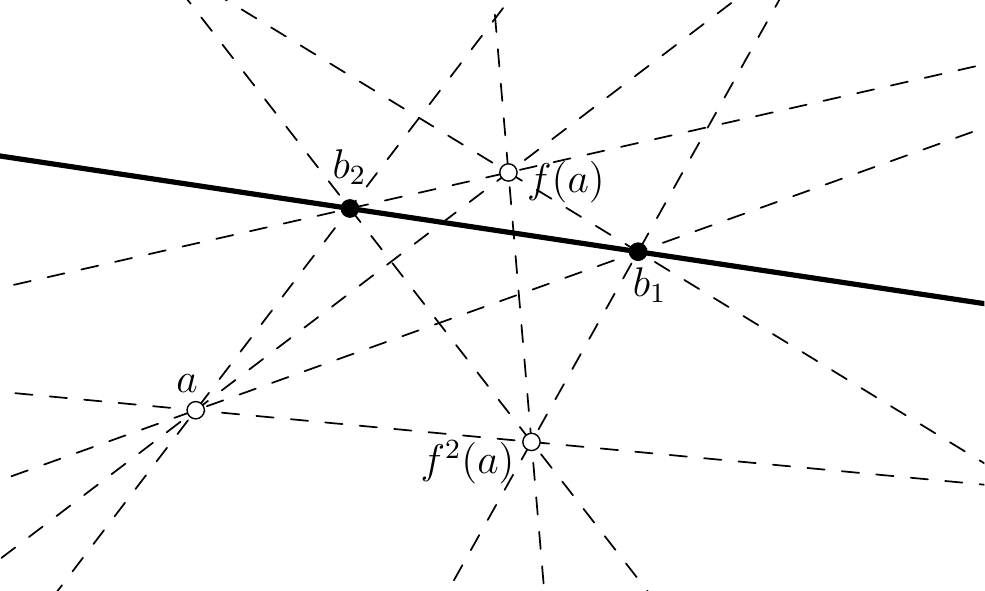}
	\caption{[Cycle type $(123)$] -- The final point $b_3$ can be any $q$-point lying off the line $\gen{b_1, b_2}$.}
	\label{s6-3111}
\end{figure}

This gives a count
\begin{align*}
	p_{6, (123)}(q)
		& = \frac{1}{18} \cdot (6 \cdot p_{5, (123)}) \cdot \parenth{\abs{\qpoints{2}} - (q + 1)}\\
		& = \frac{1}{18} (q-1)^2 q^6 (q+1)^2 (q^2+q+1) \dispunct.
\end{align*}

\subsubsection{Cycle type (123)(45)}
Refer to \cref{s6-321}.

For $p \in \U{6}(\FF_q)$ of cycle type $\sigma_p = (123)(45)$, three of the ten lines are $q^3$-lines and six of the ten are $q^6$-lines, all of which are $q$-generic.
The pair of $q^2$-points determines a single $q$-line, so the total number of $q$-points on the ten lines is
\[\abs{\qpoints{1}} = q + 1 \dispunct.\]

\begin{figure}[h]
	\includegraphics{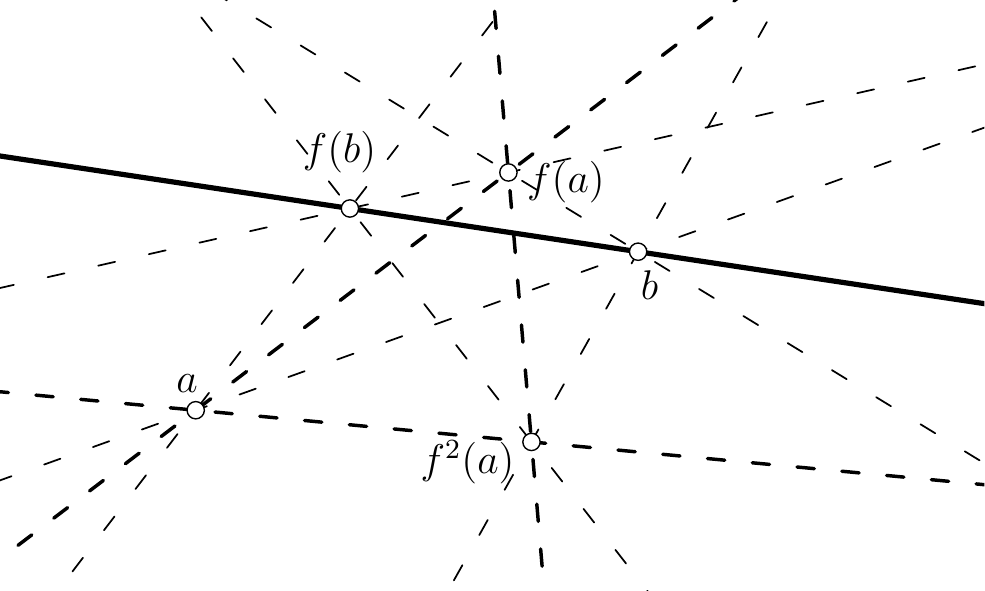}
	\caption{[Cycle type $(123)(45)$] -- The final point $c$ can be any $q$-point lying off the line $\gen{b, f(b)}$.}
	\label{s6-321}
\end{figure}

This gives a count
\begin{align*}
	p_{6, (123)(45)}(q)
		& = \frac{1}{6} \cdot (6 \cdot p_{5,(123)(45)}) \cdot \abs{\qpoints{2} - (q + 1)}\\
		& = \frac{1}{6} (q-1)^3 q^6 (q+1) (q^2+q+1) \dispunct.
\end{align*}

\subsubsection{Cycle type (123)(456)}
Let $p \in \U{6}$ have cycle type $\sigma_p = (123)(456)$. 
Then $p$ is of the form \[p = \{a_1^{(3)}, f(a_1^{(3)}), f^2(a_1^{(3)}), a_2^{(3)}, f(a_2^{(3)}), f^2(a_2^{(3)}) \} \dispunct.\]

The first $q^3$-point $a_1$ can be any generic $q^3$-point. The second $q^3$-point $a_2$ must also be generic, but subject to the following two conditions:
\begin{enumerate}
	\item $a_2$ cannot lie on any of the three (generic) $q^3$-lines determined by the orbit of $a_1$.
	\item The line $\gen{a_2, f(a_2)}$ cannot pass through any point in the orbit of $a_1$.
\end{enumerate}

If $\ell^{(3)}$ is a generic $q^3$-line, then each point on $\ell$ is a $q^3$-point.
There are $\abs{\qpoints{2}^{\vee}}$ points on $\ell$ that are non-generic, since every $q$-line intersects $\ell$ at a unique point.
(The intersection of two $q$-lines is a $q$-point and so cannot be on the line $\ell$.)

Condition (1) then says that for each of the $q^3$-lines determined by the orbit of $a_1$, we must throw out
\[\abs{\PP^1(\FF_{q^3})} - \abs{\qpoints{2}^{\vee}} = q^3 - q^2 + q\]
generic $q^3$-points.
Throwing out these points from each line double counts the points in the orbit of $a_1$.

A generic $q^3$-line $\ell^{(3)}$ determines a generic $q^3$-point by taking the intersection $\ell \cap f^2(\ell)$.
This is a bijection, as it is inverse to $p^{(3)} \mapsto \gen{p, f(p)}$, so condition (2) can be rephrased as saying that we cannot choose a (generic) $q^3$-line that passes through any point in the orbit of $a_1$.
If $p^{(3)}$ is generic, then $\abs{\PP^1(\FF_{q^3})^{\vee}}$ is the number of $q^3$-lines that pass through $p$, and $\abs{\qpoints{2}}$ such lines are non-generic. (There is one such line $\gen{p,r}$ for each $q$-point $r$.)

Moreover, under the bijection above, the only such lines that correspond to a $q^3$-point already ruled out by condition $(1)$ are the three $q^3$-lines determined by the orbit of $a_1$.
Each point in the orbit of $a_1$ has two such lines passing through it, so, for each point, condition (2) says we must throw out
\[\abs{\PP^1(\FF_{q^3})^{\vee}} - \abs{\qpoints{2}} - 2 = q^3 - q^2 + q - 2\]
generic $q^3$-points.

In total, conditions $(1)$ and $(2)$ rule out a total of
\[3 (q^3 - q^2 + q) - 3 + 3 (q^3 - q^2 + q - 2) = 6q^3 - 6q^2 + 6q - 9\]
generic $q^3$-points.
This gives a count
\begin{align*}
	p_{6, (123)(456)}(q)
		& = \frac{1}{18} \cdot \abs{\qngenpoints{2}{3}}	\cdot \parenth{\abs{\qngenpoints{2}{3}} - (6q^3 - 6q^2 + 6q - 9)}\\
		& = \frac{1}{18} (q-1)^2 q^3 (q+1) (q^2+q+1) (q^4-2q^3-3q+9) \dispunct.
\end{align*}

\subsubsection{Cycle type (1234)}
Refer to \cref{s6-411}.

For $p \in \U{5}(\FF_q)$ of cycle type $\sigma_p = (1234)$, four of the ten lines are generic $q^4$-lines, four are $q^4$-lines containing the common $q$-point $b_1$, and the remaining two are a pair of $q^2$-lines containing a second $q$-point $b'$.
So there are precisely two $q$-points on these lines, giving a count
\begin{align*}
	p_{6, (1234)}(q)
		& = \frac{1}{8} \cdot (4 \cdot p_{5, (1234)}(q)) \cdot \parenth{\abs{\qpoints{2}} - 2}\\
		& = \frac{1}{8} (q-1)^2 q^4 (q+1)^2 (q^2+q+1) (q^2 + q - 1) \dispunct.
\end{align*}

\begin{figure}[h]
	\includegraphics{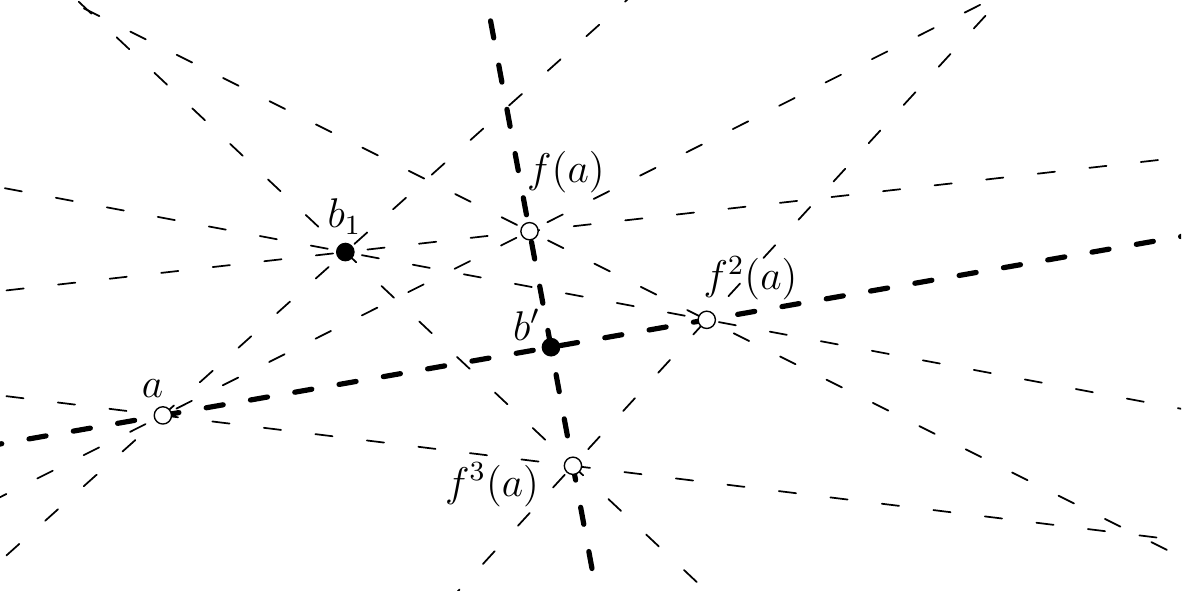}
	\caption{[Cycle type $(1234)$] -- The final point $b_2$ can be any $q$-point distinct from $b_1$ and $b'$.}
	\label{s6-411}
\end{figure}

\subsubsection{Cycle type (1234)(56)}
Refer to \cref{s6-42}.

Let $p \in \U{6}(\FF_q)$ of cycle type $\sigma_p = (1234)(56)$. 
Then $p$ is of the form \[p = \{a^{(4)}, f(a), f^2(a), f^3(a), b^{(2)}, f(b)\} \dispunct.\]

Choosing a generic $q^4$-point determines four generic $q^4$-lines and a pair of $q^2$-lines intersecting at a $q$-point $b'$.
If $\ell^{(4)} = \gen{a, f(a)}$ denotes one of the generic $q^4$-lines, then $\ell$ and its orbits contain a total of two $q^2$-points coming from the intersections $\ell \cap f^2(\ell) = a'$ and $f(\ell) \cap f^3(\ell) = f(a')$.
The six lines therefore contain a total of
\[2 \cdot \parenth{\abs{\PP^1(\FF_{q^2})} - 1} + 2 = 2q^2 + 2\]
$q^2$-points. Since we can choose $b$ to be any other $q^2$-point, this gives a count
\begin{align*}
	p_{6, (1234)(56)}(q)
		& = \frac{1}{8} \cdot \abs{\qngenpoints{2}{4}} \parenth{\abs{\qnpoints{2}{2}} - (2q^2 + 2)}\\
		& = \frac{1}{8} (q-1)^2 q^3 (q+1) (q^2+q+1) (q^4 - 2q^2 - q - 2) \dispunct.
\end{align*}

\begin{figure}[h]
	\includegraphics{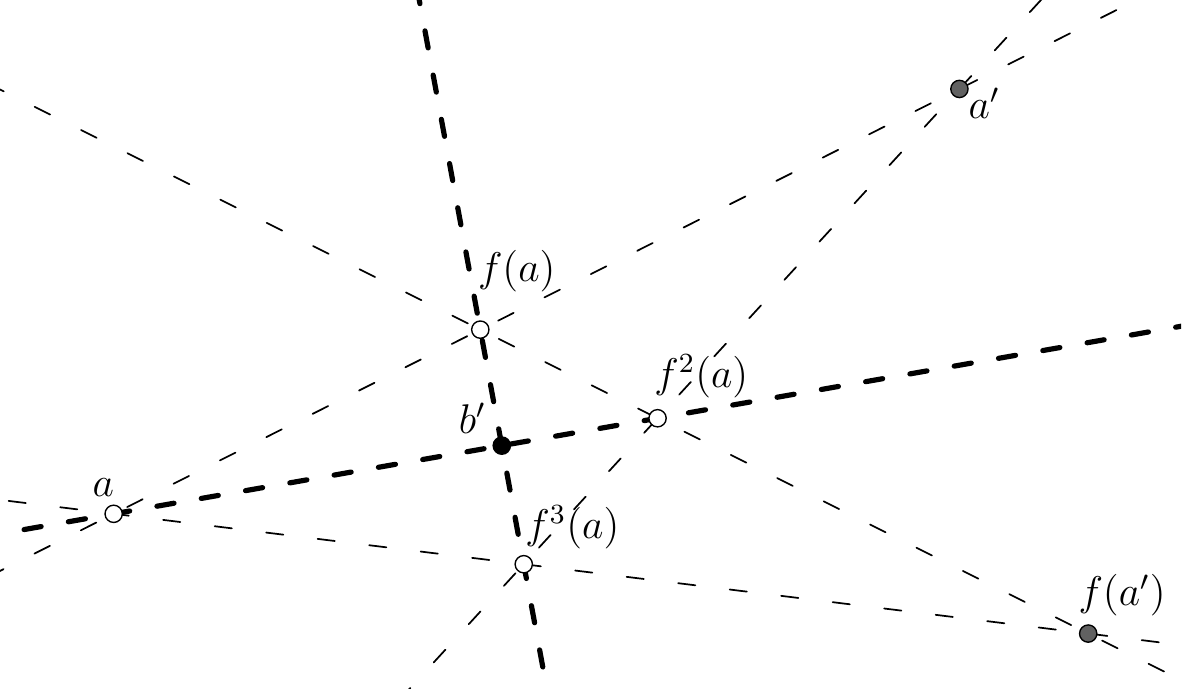}
	\caption{[Cycle type $(1234)(56)$] -- The final point $b$ can be any $q^2$-point avoiding the thick dashed lines ($q^2$-lines) and the $q^2$-points $a'$ and $f(a')$.}
	\label{s6-42}
\end{figure}

\subsubsection{Cycle type (12345)}
For $p \in \U{5}(\FF_q)$ of cycle type $\sigma_p = (12345)$, all ten lines are generic $q^5$-lines. So the final $q$-point may be chosen arbitrarily, giving a count
\begin{align*}
	p_{6, (12345)}(q)
		& = \frac{1}{5} \cdot (5 \cdot p_{5, (12345)}(q)) \cdot \abs{\qpoints{2}}\\
		 & = \frac{1}{5} (q-1)^2 q^3 (q+1) (q^2+1) (q^2+q+1)^2 \dispunct.
\end{align*}

\subsubsection{Cycle type (123456)}
Let $p \in \U{6}(\FF_q)$ have cycle type $\sigma_p = (123456)$.

We can choose any generic $q^6$-point. This gives a count
\begin{align*}
	p_{6, (123456)}(q)
		& = \frac{1}{6} \cdot \abs{\qngenpoints{2}{6}}\\
		& = \frac{1}{6} (q-1)^2 q^3 (q+1) (q^2+q+1) (q^4+q-1) \dispunct.
\end{align*}

This completes the point counts in \cref{s5-counts-table,s6-counts-table} and is the last step in establishing \cref{main-theorem}.

\printbibliography

\end{document}